\documentclass[11pt]{article}

\usepackage{natbib,color,amssymb,amsmath,graphicx,float,booktabs,subfig,amsthm,hyperref,bm,multirow,threeparttable,textcomp}
\usepackage{JASA_manu}

\begin{document}
\theoremstyle{definition}
\newtheorem{assumption}{Assumption}
\newtheorem{theorem}{Theorem}
\newtheorem{lemma}{Lemma}
\newtheorem{example}{Example}
\newtheorem{definition}{Definition}
\newtheorem{corollary}{Corollary}
\newtheorem{proposition}{Proposition}
\newtheorem{condition}{Condition}

\renewcommand {\thecondition} {\Alph{condition}}

\newcommand{\red}{\color{red}}
\newcommand{\blue}{\color{blue}}
\newcommand{\mage}{\color{magenta}}

\def\logit{\text{logit}}

\title{\bf Identifiability of Normal and Normal Mixture Models With
Nonignorable Missing Data}
\author{
Wang Miao, Peng Ding, and Zhi Geng 
\footnote{ 
Wang Miao is Ph.D. Candidate (Email: \texttt{mwfy@pku.edu.cn}), Beijing International Center for Mathematical Research, Peking University, 5 Summer Palace Road 100871, Beijing, P. R. China.
Peng Ding is Ph.D. (E-mail: \texttt{pengdingpku@gmail.com}), Department of Statistics, Harvard University, 1 Oxford Street, Cambridge 02138, MA, USA.
Zhi Geng is Professor (E-mail: \texttt{zhigeng@pku.edu.cn}), School of Mathematical Sciences and Center for Statistical Science, Peking University, 5 Summer Palace Road 100871, Beijing, P. R. China.
Zhi Geng's research was supported by National Natural Science Foundation of China (11171365, 11021463), 973 Program (2015CB856000) and 863 Program (2015AA020507) of China.
The authors thank Mr. Zhichao Jiang of Peking University, Dr. Hua Chen of Institute of Applied Physics and Computational Mathematics in Beijing, Dr. Eric Tchetgen Tchetgen of Harvard T. H. Chan School of Public Health for helpful comments on early versions of our paper. We are grateful for the review comments from the Associate Editor and a reviewer.
}
}
\date{}
\maketitle

\begin{abstract}
Missing data problems arise in many applied research studies. They may jeopardize statistical inference of the model of interest, if the missing mechanism is nonignorable, that is, the missing mechanism depends on the missing values themselves even conditional on the observed data. With a nonignorable missing mechanism, the model of interest is often not identifiable without imposing further assumptions. We find that even if the missing mechanism has a known parametric form, the model is not identifiable without specifying a parametric outcome distribution. Although it is fundamental for valid statistical inference, identifiability under nonignorable missing mechanisms is not established for many commonly-used models. In this paper, we first demonstrate identifiability of the normal distribution under monotone missing mechanisms. We then extend it to the normal mixture and $t$ mixture models with non-monotone missing mechanisms. We discover that models under the Logistic missing mechanism are less identifiable than those under the Probit missing mechanism. We give necessary and sufficient conditions for identifiability of models under the Logistic missing mechanism, which sometimes can be checked in real data analysis. We illustrate our methods using a series of simulations, and apply them to a real-life dataset. 
\medskip

\noindent {\bfseries Keywords}: Heavy tail; Logistic model; Missing not at random; Monotone missing mechanism; Probit model; Selection model.
\end{abstract}

\section{\bf Introduction}
Missing data arise in many biomedical and socioeconomic studies. In the presence of missing data, the observed data may not be representative for the  population of interest, especially when the missing mechanism depends on the missing values themselves. For instance, if rich people tend not to respond in a survey, then the average wage obtained from the observed data will be lower than the truth.

An effective way to overcome this problem is to model the missing mechanism conditional on the observed covariates and the outcome. Using the terminologies in \cite{rubin1976inference}, the missing mechanism is called missing at random (MAR) if it does not depend on the missing values themselves conditional on the observed data, and it is called missing not at random (MNAR) otherwise. In the current literature, a variety of estimation methodologies based on the MAR assumption have been proposed, including likelihood-based inference, imputation, inverse probability weighting, and doubly robust methods. However, MNAR is often the case in practice, when the missingness depends on the missing values even conditional on the observed covariates. Unfortunately, statistical inference becomes quite challenging with data subject to MNAR mechanisms, because the models are often not identifiable based on the observed data. Many authors have studied models under MNAR mechanisms. Among them, the most popular approach is to model the conditional distribution of the missing indicator given the outcome and covariates, termed  the selection model \citep{little2002statistical}. \cite{greenlees1982imputation} propose maximum likelihood estimators for survey data with missing values, based on a fully parametric Logistic MNAR mechanism. \cite{qin2002estimation} propose an empirical likelihood estimation procedure for the case with nonparametric outcome model and parametric missing mechanism. \cite{ma2013semiparametric} study the semiparametric case with a symmetric outcome distribution and a parametric missing mechanism. Although they have developed useful estimation methods, the identifiability of their models may not be guaranteed even if the the missing mechanism is parametric. 
\cite{rotnitzky1998semiparametric} and \cite{scharfstein2003generalized} develop methods for conducting sensitivity analysis by assuming  completely specified outcome-dependent terms in the missing mechanisms. In his influential paper in econometrics, \cite{heckman1979sample} proposes the Heckman Selection Model consisting of an outcome equation and a selection equation indicating the latent variable for the missing mechanism. In the Heckman Selection Model, the missing mechanism is nonignorable when the bivariate normal error terms of the outcome equation and selection equation are correlated. As a fundamental problem, valid statistical inference of these methods relies on their identifiability. Unfortunately, identifiability does not always hold even in parametric models.

When an ``instrumental variable'' is available, i.e., there exists a variable that is associated with the outcome variable but independent of the missingness conditional on the outcome, identifiability of some MNAR models can be achieved. For example, \cite{chen2001parametric} shows identifiability of a subset of the regression parameters with biased sampling data, \cite{wang2014instrumental} show identifiability of certain nonparametric models with parametric missing mechanism, and \cite{chen2009identifiability} demonstrate the identifiability for binary outcomes with nonignorable missing data. However, it is not often feasible to find such an instrumental variable, without which the identifiability is not guaranteed in general.

In this paper, we focus on the identifiability of models under MNAR mechanisms. We illustrate with counterexamples the potential difficulty for achieving identifiability under nonparametric outcome models in Section 2. We prove identifiability of the normal model under the monotone missing mechanism in Section 3 without requiring any instrumental variables. We find that models under the frequently-used Logistic missing mechanism are less identifiable than those under the Probit missing mechanisms, and give necessary and sufficient conditions for the identifiability of models under the Logistic missing mechanism. In Section 4, we extend the results to normal mixture and $t$ mixture models, which are useful to accommodate more complex data features such as heavy-tailedness and multimodality. In Section 5, we propose a latent monotone missing mechanism and establish their identifiability. In Section 6, we evaluate the finite sample properties of the nonignorable missing data models via a series of simulations. The simulation results show advantages of normal mixture and $t$  mixture models for fitting complex data. In Section 7, we detect a nonignorable missing mechanism and latent components of the outcome variable in analysis of a real-life data on ambulatory expenditure. We conclude in Section 8, and relegate all the technical details to the Supplementary Materials.

\section{\bf Potential Difficulty for Nonparametric Identification}\label{sec::nonpara}

Throughout the paper, we let $X$ denote completely observed covariates, $Y$ denote the outcome variable, and $R$ denote the missing  indicator of $Y$ with $R=1$ if $Y$ is observed and $R=0$ otherwise. We use lower-case letters to denote realized values of the corresponding random variables, e.g., $y$ for a realized value of $Y$.  Suppose the observed data are $n$ independently and identically distributed samples, with some values of $Y$ missing.

The observed data allow us to identify only the observed distribution $P(y, R=1|x)$, which, however, does not suffice to determine the joint distribution $P(y,r|x)$ without additional assumptions. There are two equivalent ways to factorize the joint distribution:
$$
P(y, r|x)  =  P(y|x) P(r|x,y)  = P(r|x) P(y|x,r),
$$
with the first one being called the selection model and the second one being called the pattern mixture model \citep{little2002statistical}. Analogously, the observed distribution permits the following two equivalent factorizations:
$$
P(y, R=1|x) = P(y|x) P(R=1|x,y) = P(R=1|x) P(y|x,R=1).
$$
In this paper, we adopt the selection model factorization for our discussion. Fundamentally, we are interested in identifying the joint distribution $P(y, r|x) $ by the observed distribution $P(y, R=1|x)$. We say that the model is identifiable, if and only if the joint distribution $P(y, r|x) $ can be uniquely determined by the observed distribution $P(y, R=1|x)$, or, equivalently, two models yielding the same observed distribution must have the same joint distribution.

Because of their flexibility, nonparametric models are often used in the missing data literature \citep[e.g.,][]{qin2002estimation, ma2013semiparametric}. However, in general, the outcome distribution is not identifiable without specifying a parametric form for it, even if the missing mechanism is parametric. Below we provide three counterexamples to illustrate this potential difficulty.

\begin{example}\label{eg::unif}
Consider the following models:

\noindent Model \ref{eg::unif}.1. $Y\thicksim \text{Unif}(-0.5,0.5)$, and $P(R=1|y)=\Phi(y)$, where $\Phi(y)$ is the distribution function of the standard normal distribution;

\noindent Model \ref{eg::unif}.2. $Y$ has density $2\Phi(y)I\{-0.5\leq y \leq 0.5\}$, and $P(R=1|y)=\Phi(0)=1/2$.

The models above have the same parametric Probit missing mechanism, but they have different outcome distributions. Nevertheless, they have the same observed distribution because
$$
I\{-0.5\leq y \leq 0.5\} \cdot  \Phi(y)  = 2\Phi(y)I\{-0.5\leq y \leq 0.5\} \cdot  1/2.
$$
Therefore, if we assume a nonparametric model for the outcome with a Probit missing mechanism, they cannot be distinguished by the observed data $P(y, R=1)$. 
\end{example}

\begin{example}\label{eg::expo}
Consider the following models:

\noindent Model \ref{eg::expo}.1: $Y\sim \text{Exp}(2),$ $ \text{logit}\ P(R=1|y)=-\log 2 + y$;

\noindent Model \ref{eg::expo}.2: $Y\sim \text{Exp}(1),$ $ \text{logit}\ P(R=1|y) =\log 2 - y$.

The models above have the same parametric Logistic missing mechanism, but they have different outcome distributions. They have the same observed distribution because
$$
2e^{-2y} \cdot \frac{e^{-\log 2+y}}{1+e^{-\log 2 + y}}
=
e^{-y} \cdot \frac{e^{\log 2 - y}}{1+e^{\log 2-y}}.
$$
Therefore, they cannot be distinguished by the observed distribution.
\end{example}

The above counterexamples demonstrate the difficulty for obtaining identifiability of nonparametric outcome models even with parametric missing mechanisms. Unfortunately, even if we further restrict the outcome distributions to be symmetric as \cite{ma2013semiparametric} and assume parametric missing mechanisms, we still cannot identify the mean or distribution of the outcome.

\begin{example}\label{ex:logit}
Consider the following models:

\noindent Model \ref{ex:logit}.1. $Y\thicksim N(1,1)$, $\logit\ P(R=1|y)=-3/2+y$;

\noindent Model \ref{ex:logit}.2. $Y\thicksim N(2,1)$, $\logit\ P(R=1|y)=3/2-y$.

The models above have the same parametric Logistic missing mechanism, and symmetric outcome distributions. However, they cannot be identified by the observed distribution because
$$
\phi\left(y-1\right) \cdot  \frac{\exp(-3/2+y)}{1+\exp(-3/2+y)} = 
\phi\left(y-2\right) \cdot  \frac{\exp( 3/2-y)}{1+\exp( 3/2-y)} ,
$$
where $\phi(y)$ is the density of the standard normal distribution.
\end{example}

The above counterexamples show that nonparametric or semiparametric outcome models are generally not identifiable, even though the missing mechanisms are parametric. Without identifiability, the estimates for the parameters of the nonparametric outcome models may be misleading and of limited interest in practice.

\section{\bf Normal model with nonignorable missing data}\label{sec:2}
With nonignorable missing mechanisms, although identifiability of nonparametric outcome models is often hard to achieve as illustrated in Section \ref{sec::nonpara}, parametric outcome models are more likely to be identified. However, identifiability of many commonly-used parametric models have not been established in the literature. In this section, we discuss identifiability of the normal model with nonignorable missing data. We first show the conditions for its identifiability without covariates,
and illustrate the non-identifiability for some nonignorable missing mechanisms, such as the commonly-used Logistic missing mechanism. We then extend the result to nonignorable missing mechanisms with covariates, and utilize covariates to improve identifiability.

\subsection{Identifiability without covariates} \label{sec:2.1}

Suppose $Y\thicksim N(\mu, \sigma^2)$, with the following missing mechanism:
\begin{align}
P(R=1|y)=F(\alpha+\beta  y), \label{mis}
\end{align}
where $F(\cdot)$ is a known and strictly monotone distribution function with support on $(-\infty,+\infty)$. For instance, the standard normal distribution corresponds to the Probit missing mechanism, and the Logistic distribution corresponds to the Logistic missing mechanism. The missing mechanism is MAR if $\beta=0$, and it is MNAR if $\beta\neq 0.$ 
Mechanism (\ref{mis}) depicts that the response
probability is monotone in the value of the outcome.
For example, in some sensitive questionnaires,
people with higher outcome values tend not to respond, and therefore $\beta < 0$.

The following condition about the tail behavior of $F(\cdot)$ plays a central role in our discussion.

\begin{condition}\label{condi:A}
For any $\delta > 0$, $\lim_{z\rightarrow -\infty} F(z) / e^{\delta z}=0$ or
$+\infty$.
\end{condition}

Condition \ref{condi:A} requires the left tail decay rate of the response probability be not exponential. We can verify that the Probit missing mechanism satisfies Condition \ref{condi:A}, but the Logistic missing mechanism does not because $\lim_{z\rightarrow -\infty}  \{e^z/ (1+e^z) \}/e^z = 1$.

\begin{theorem}
\label{thm:NF} Suppose $Y\thicksim N(\mu, \sigma^2)$ with missing mechanism (\ref{mis}). Then
\begin{itemize}
\item[(a)] $\sigma^2$ and $|\beta|$ are identifiable;
\item[(b)]  $\mu$, $\sigma^2$, $\alpha$ and $\beta$ are identifiable,
if the sign of $\beta$ is known;
\item[(c)] $\mu$, $\sigma^2$, $\alpha$ and $\beta$ are identifiable,
if Condition \ref{condi:A} holds.
\end{itemize}
\end{theorem}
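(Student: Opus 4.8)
The plan is to work directly from the definition of identifiability: suppose two parameter vectors $(\mu_1,\sigma_1^2,\alpha_1,\beta_1)$ and $(\mu_2,\sigma_2^2,\alpha_2,\beta_2)$ produce the same observed sub-density $g(y)=\phi_{\mu,\sigma}(y)\,F(\alpha+\beta y)=P(y,R=1)$, and deduce the relations forced on the parameters. Taking logarithms of $\phi_{\mu_1,\sigma_1}(y)F(\alpha_1+\beta_1 y)=\phi_{\mu_2,\sigma_2}(y)F(\alpha_2+\beta_2 y)$ and collecting the Gaussian parts gives the master equation
\[
\log F(\alpha_1+\beta_1 y)-\log F(\alpha_2+\beta_2 y)=Q(y),
\]
where $Q$ is a polynomial of degree at most two whose leading coefficient is $\tfrac{1}{2\sigma_1^2}-\tfrac{1}{2\sigma_2^2}$. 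The only facts about $F$ I would use at the start are that $\log F(z)\to 0$ as $z\to+\infty$ and $\log F(z)\to-\infty$ as $z\to-\infty$; no information on the decay rate is needed yet.

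First I would show $\sigma_1=\sigma_2$, i.e.\ that the leading coefficient of $Q$ vanishes. Suppose not; then $Q(y)\to+\infty$ at both $\pm\infty$ (or $-\infty$ at both, by symmetry). Running through the four sign patterns of $(\beta_1,\beta_2)$, in each pattern the left-hand side tends to a finite limit or to $-\infty$ at (at least) one of the two ends, contradicting $Q\to+\infty$ there. For example, if $\beta_1>0>\beta_2$ the left-hand side is $+\infty$ as $y\to+\infty$ but $-\infty$ as $y\to-\infty$, which cannot match a quadratic that diverges to $+\infty$ at both ends. This step is rate-free and hence valid for every strictly monotone $F$. With $\sigma_1=\sigma_2=:\sigma$, the remaining polynomial is affine, $Q(y)=By+C$ with $B=(\mu_2-\mu_1)/\sigma^2$.

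Next I would split on the signs of $\beta_1,\beta_2$. If they share a sign, then at the end where both arguments go to $+\infty$ the left-hand side tends to $0$, forcing the affine function $By+C$ to vanish identically; hence $\mu_1=\mu_2$, and then $F(\alpha_1+\beta_1 y)=F(\alpha_2+\beta_2 y)$ together with injectivity of $F$ gives $\beta_1=\beta_2$ and $\alpha_1=\alpha_2$ --- complete identification. If instead $\beta_1$ and $\beta_2$ have opposite signs, the master equation forces $-\log F(\alpha_2+\beta_2 y)=By+C+o(1)$ as $y\to+\infty$, i.e.\ $\log F$ is asymptotically affine at $-\infty$ with slope $-B/\beta_2$; the analogous expansion as $y\to-\infty$ gives slope $B/\beta_1$. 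Equating the two slopes yields $\beta_1=-\beta_2$, so $|\beta_1|=|\beta_2|$, while $\mu$, $\alpha$, and the sign of $\beta$ remain free. The boundary MAR case $\beta=0$ I would dispatch separately, since then $g$ is proportional to a normal density.

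Assembling the pieces proves all three claims: in both branches $\sigma^2$ and $|\beta|$ coincide, giving (a); knowing the sign of $\beta$ places the two models in the same-sign branch, giving (b); and Condition \ref{condi:A} rules out an affine left tail of $\log F$, so the opposite-sign branch is impossible and the same-sign branch forces full identification, giving (c). I expect the main obstacle to be the tail analysis for a completely general, possibly non-smooth $F$: the sign-based contradiction in the first step must be made airtight without any decay-rate assumption, and in the opposite-sign branch I must argue carefully that the equation itself makes the asymptotic slope of $\log F$ well defined, and then tie the existence of opposite-sign solutions precisely to the failure of Condition \ref{condi:A}.
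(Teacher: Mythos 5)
Your proposal is correct and takes essentially the same route as the paper's proof: you work with the logarithm of the ratio $\phi_{\mu_1,\sigma_1}(y)F(\alpha_1+\beta_1 y)\big/\bigl\{\phi_{\mu_2,\sigma_2}(y)F(\alpha_2+\beta_2 y)\bigr\}$ where the paper analyzes the ratio itself via Lemmas \ref{lem:1}, \ref{lem:2} and \ref{lem:4}. The substantive steps coincide — forcing the quadratic term to vanish to get $\sigma_1=\sigma_2$, splitting on the sign pattern of $(\beta_1,\beta_2)$, obtaining full identification in the same-sign case, and showing the opposite-sign case requires $F(z)\sim c\,e^{\delta z}$ as $z\to-\infty$ with matched slopes at the two ends (giving $\beta_1=-\beta_2$ and hence $|\beta|$ identifiable), which Condition \ref{condi:A} rules out.
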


Result (a) above indicates that the variance of the outcome and the absolute value of its impact  on the response probability are always identifiable, but the sign of its impact may not be identifiable. Result (b)  implies that all parameters are identifiable if we have prior knowledge about the tendency of the missingness, i.e., the sign of $\beta$. Result (c) shows that all parameters are identifiable, if $F(\cdot)$ satisfies Condition \ref{condi:A}. Example \ref{ex:logit} in Section \ref{sec::nonpara} is also an example illustrating that the normal model under the Logistic missing mechanism may not be identifiable. A similar example is also presented in \cite{wang2014instrumental}.

If $F(\cdot)=T_\nu(\cdot)$,  the standard $t$ distribution function with degrees of freedom $\nu$,
then it satisfies Condition \ref{condi:A} when $\nu$ is known,
and thus all parameters are identifiable. The model with $F(\cdot)=T_\nu(\cdot)$ for the missing mechanism is sometimes referred to as the Robit model \citep{liu2004robit}. By varying the degrees of freedom $\nu$,
the Robit models can be used to approximate many missing mechanisms.
For instance, when $\nu$ tends to infinity,
it approximates the Probit model;
when $\nu$ is near $7$ or $8$, it approximates the Logistic model
\citep{mudholkar1978remark, liu2004robit}.
In fact, we have a stronger conclusion for the Robit model that the degrees of freedom parameter is also identifiable even if it is unknown. 

\begin{corollary}\label{cor:NT}
If $Y\thicksim N(\mu, \sigma^2)$, $P(R=1|y)=T_\nu(\alpha+\beta  y)$, and $\beta\neq0$,
then $\mu$, $\sigma^2$, $\nu$, $\alpha$ and $\beta$ are all identifiable.
\end{corollary}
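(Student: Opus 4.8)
The plan is to reduce the problem to the one already solved in Theorem~\ref{thm:NF} by first isolating and identifying the degrees of freedom $\nu$ from the polynomial tail of the observed density. Suppose two Robit models, with parameters $(\mu_1,\sigma_1^2,\nu_1,\alpha_1,\beta_1)$ and $(\mu_2,\sigma_2^2,\nu_2,\alpha_2,\beta_2)$ where $\beta_1,\beta_2\neq 0$, induce the same observed density
$$
g_j(y)=\frac{1}{\sigma_j}\phi\!\left(\frac{y-\mu_j}{\sigma_j}\right)T_{\nu_j}(\alpha_j+\beta_j y),\qquad j=1,2,
$$
with $g_1(y)=g_2(y)$ for all $y$. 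Since both factors are strictly positive, I would take logarithms and study $\log g_1(y)-\log g_2(y)\equiv 0$ as $y\to\pm\infty$. The key analytic input is the polynomial tail of the $t$ distribution: as $t\to-\infty$ one has $T_\nu(t)\sim c_\nu|t|^{-\nu}$ for a constant $c_\nu>0$, so $\log T_{\nu_j}(\alpha_j+\beta_j y)$ contributes a term of exact order $-\nu_j\log|y|$ precisely in the tail where $\beta_j y\to-\infty$, and a term that vanishes in the opposite tail.

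First I would separate scales in the identity $\log g_1-\log g_2\equiv 0$. The coefficient of $y^2$ equals $-\tfrac{1}{2\sigma_1^2}+\tfrac{1}{2\sigma_2^2}$; because the remaining terms grow at most like $y$ (the linear Gaussian term) or $\log|y|$ (the $t$ contribution), this coefficient must vanish, giving $\sigma_1^2=\sigma_2^2=:\sigma^2$. With equal variances the quadratic terms cancel and the leading surviving term is the linear one, $\tfrac{(\mu_1-\mu_2)}{\sigma^2}\,y$; since the $t$ contributions are only $O(\log|y|)=o(y)$, matching forces $\mu_1=\mu_2=:\mu$. At this point the two Gaussian factors coincide and can be cancelled, leaving the missing-mechanism identity $T_{\nu_1}(\alpha_1+\beta_1 y)=T_{\nu_2}(\alpha_2+\beta_2 y)$ for all $y$.

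It then remains to identify $\nu,\alpha,\beta$ from this identity of affinely reparametrized $t$ CDFs. Writing $z=\alpha_1+\beta_1 y$ (legitimate since $\beta_1\neq0$) turns it into $T_{\nu_1}(z)=T_{\nu_2}(\gamma z+\eta)$ for all $z$, with $\gamma=\beta_2/\beta_1$ and $\eta=\alpha_2-\gamma\alpha_1$. Letting $z\to-\infty$ and again invoking $T_\nu(t)\sim c_\nu|t|^{-\nu}$: if $\gamma<0$ the right-hand side tends to $1$ while the left tends to $0$, a contradiction, so $\gamma>0$; matching the exact polynomial orders $|z|^{-\nu_1}$ and $|z|^{-\nu_2}$ then yields $\nu_1=\nu_2=:\nu$. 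Finally, $T_\nu(z)=T_\nu(\gamma z+\eta)$ together with the strict monotonicity of $T_\nu$ forces $\gamma z+\eta=z$ identically, hence $\gamma=1$ and $\eta=0$, i.e. $\beta_1=\beta_2$ and $\alpha_1=\alpha_2$. Alternatively, once $\nu$ is shown to be common one may simply fix $F=T_\nu$, note that it satisfies Condition~\ref{condi:A}, and invoke Theorem~\ref{thm:NF}(c) to conclude identifiability of $\mu,\sigma^2,\alpha,\beta$.

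I expect the main obstacle to be the tail analysis itself: one must pin down the precise asymptotics $T_\nu(t)\sim c_\nu|t|^{-\nu}$ and justify the term-by-term matching of the expansion of $\log g_1-\log g_2$ across the distinct scales $y^2$, $y$, $\log|y|$, and $O(1)$, verifying that each remainder is genuinely of lower order and that the sign of $\beta_j$ (which determines which tail carries the $|y|^{-\nu_j}$ factor) does not disrupt this ordering. Once the scale separation is in place, the comparison of polynomial tail indices to read off $\nu$ and the monotonicity argument to pin down $\alpha$ and $\beta$ are routine.
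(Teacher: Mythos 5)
Your proposal is correct and follows essentially the same route as the paper: the paper likewise first identifies $\sigma$ and then $\mu$ by comparing the tail growth of the two observed densities (its Lemmas~\ref{lem:1} and \ref{lem:2} are the ratio form of your coefficient-matching on $y^2$ and $y$), rules out a sign flip of $\beta$ because $T_\nu$ satisfies Condition~\ref{condi:A} (the content of your observation that the $t$ factor contributes only $O(\log|y|)$ and never an exponential term), and then reads off $\nu$ from the limit of $T_\nu(\alpha+\beta y)/T_{\nu'}(\alpha'+\beta' y)$ as $y\to-\infty$, which is exactly your polynomial tail-index comparison, before concluding $\alpha$ and $\beta$ from strict monotonicity. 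The only organizational difference is that you run a single logarithmic expansion with scale separation in place of the paper's four-way case split on the signs of $\beta$ and $\beta'$.
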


\subsection{Identifiability with covariates} \label{sec:2.3}
When some completely observed covariates $X$ are available, we assume the outcome model
\begin{align}
Y|x  \thicksim &N\{   \mu(x,\gamma), \sigma^2(x,\theta) \},  \label{or}
\end{align}
and a generalized additive missing mechanism
\begin{align}
P(R=1|x,y)=F\{g(x,\alpha)+\beta y\}, \label{misx}
\end{align}
where $F(\cdot)$ is a known and strictly monotone distribution function,
$\mu(\cdot, \cdot)$ and $\sigma(\cdot, \cdot)$ have known forms, and
$(\gamma,\theta,\alpha,\beta)$ are unknown parameters. 
Note that  $(\gamma,\theta,\alpha)$ may be vectors. And we require that $\mu(\cdot,\gamma)$, $\sigma(\cdot,\theta)$ and $g(\cdot,\alpha)$ have a one-to-one mapping to $\gamma,\theta$ and $\alpha$, respectively. For instance, the linear function $\mu(x,\gamma)=x^T\gamma$ satisfies this condition.

\begin{theorem}\label{thm:NFx}
Assume that the outcome model is (\ref{or}) and the missing mechanism is (\ref{misx}). Then
\begin{itemize}
\item[(a)] $\theta$ and $|\beta|$ are identifiable;

\item[(b)] $\gamma$, $\theta$, $\alpha$ and $\beta$ are identifiable, if the sign of $\beta$ is known;

\item[(c)] $\gamma$, $\theta$, $\alpha$ and $\beta$ are identifiable, if Condition \ref{condi:A} holds;

\item[(d)]  $\gamma$, $\theta$, $\alpha$ and $\beta$ are identifiable, if the function pair $\{   \mu(\cdot, \gamma), \sigma^2(\cdot, \theta) \} $ and the function $g(\cdot, \alpha)$ are linearly uncorrelated, i.e., $a\cdot\mu(\cdot,\gamma)+b\cdot \sigma^2(\cdot,\theta)+g(\cdot,\alpha)\neq c$ for nonzero vector $(a,b,c)$ and for all $(\gamma, \theta, \alpha)$. 
\end{itemize}
\end{theorem}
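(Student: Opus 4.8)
The plan is to reduce the problem with covariates to the no-covariate case of Theorem~\ref{thm:NF} by conditioning on each value of $X$. For a fixed $x$, the conditional law of $(Y,R)$ given $X=x$ is exactly a no-covariate normal model with mean $\mu(x,\gamma)$, variance $\sigma^{2}(x,\theta)$, and missing mechanism $F\{g(x,\alpha)+\beta y\}$; the observed distribution $P(y,R=1\mid x)$ is identified from the data for every $x$ in the support. Applying Theorem~\ref{thm:NF}(a) at each $x$ identifies $\sigma^{2}(x,\theta)$ for all $x$ together with $|\beta|$, and since $\sigma^{2}(\cdot,\theta)$ is in one-to-one correspondence with $\theta$, this yields part~(a). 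Parts~(b) and~(c) follow the same way: under a known sign of $\beta$, or under Condition~\ref{condi:A}, Theorem~\ref{thm:NF}(b)/(c) identifies $\mu(x,\gamma)$, $g(x,\alpha)$ and $\beta$ for every $x$, and the one-to-one maps $\mu(\cdot,\gamma)\mapsto\gamma$, $\sigma^{2}(\cdot,\theta)\mapsto\theta$, $g(\cdot,\alpha)\mapsto\alpha$ then recover all parameters.

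Part~(d) is the genuinely new statement, where covariates buy identifiability that is unavailable in the no-covariate setting. By part~(a), any observationally equivalent parameters share the same $\theta$ and satisfy $\tilde\beta=\pm\beta$; and when $\tilde\beta=\beta$ part~(b), applied within the fixed-sign class, already forces $\tilde\gamma=\gamma$ and $\tilde\alpha=\alpha$. Hence the only way identifiability can fail is through a \emph{sign flip} $\tilde\beta=-\beta$. Assuming such a competing model exists and taking $\beta>0$ without loss of generality, I would equate the two observed densities at each $x$, take logarithms, and with $G=\log F$ and $u=\beta y$ show that $G(g_{x}+u)-G(\tilde g_{x}-u)$ must be \emph{affine} in $u$ for every $x$, where $g_{x}=g(x,\alpha)$, $\tilde g_{x}=g(x,\tilde\alpha)$, and the affine coefficients are pinned down by $\mu(x,\gamma),\ \mu(x,\tilde\gamma),\ \sigma^{2}(x,\theta)$.

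The affine requirement is the crux. Differentiating in $u$ shows $G'(g_{x}+u)+G'(\tilde g_{x}-u)$ is constant in $u$, i.e.\ $G'$ possesses an additive symmetry centre at $s_{x}/2$ with $s_{x}=g_{x}+\tilde g_{x}$. A function cannot have two distinct additive symmetry centres unless $G'$ is affine, which would make $\log F$ quadratic and force $F$ to fail to be a proper distribution function; therefore $s_{x}\equiv s^{*}$ is the same constant for all $x$. Using the resulting identity $G(t)-G(s^{*}-t)=\kappa(t-s^{*}/2)$, where $\kappa=\lim_{z\to-\infty}G'(z)$ is the exponential left-tail rate of $F$, slope matching gives $\tilde\mu(x)=\mu(x,\gamma)+\kappa\beta\sigma^{2}(x,\theta)$, and intercept matching then yields $\beta\,\mu(x,\gamma)+\tfrac12\kappa\beta^{2}\,\sigma^{2}(x,\theta)+g(x,\alpha)=s^{*}/2$ for all $x$. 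This is precisely a relation $a\mu(\cdot,\gamma)+b\sigma^{2}(\cdot,\theta)+g(\cdot,\alpha)=c$ with $(a,b,c)=(\beta,\tfrac12\kappa\beta^{2},s^{*}/2)\neq0$, contradicting the linear-uncorrelatedness hypothesis of~(d). Hence no sign flip is possible, the sign of $\beta$ is identified, and part~(b) delivers identifiability of all parameters.

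The main obstacle I anticipate is the functional-equation step: rigorously establishing that the affine-in-$u$ condition forces a single common symmetry centre $s^{*}$ across all $x$, and correctly handling the tail constant $\kappa$ for a general strictly monotone $F$, including the borderline exponential-tailed cases such as the Logistic, for which $s^{*}=0$ and $\kappa=1$ recover exactly the relation $g+\beta\mu+\tfrac12\beta^{2}\sigma^{2}=0$ underlying Example~\ref{ex:logit}. Making the ``two symmetry centres imply quadratic $\log F$'' step precise, and ensuring the argument does not secretly assume differentiability of $F$, are where the care is needed; everything else reduces to the pointwise application of Theorem~\ref{thm:NF} and the one-to-one parametrisation assumption.
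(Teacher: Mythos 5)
Your treatment of parts (a)--(c) is exactly the paper's: condition on $X=x$, apply Theorem~\ref{thm:NF} pointwise to identify the functions $\mu(\cdot,\gamma)$, $\sigma^2(\cdot,\theta)$, $g(\cdot,\alpha)$ and $|\beta|$, and invoke the assumed one-to-one parametrisation. For part (d) your skeleton also matches the paper's: reduce to the sign-flip case $\beta'=-\beta$, show that this forces $F$ to have an exponential left tail with an $x$-independent rate, and extract the linear relation $\beta\,\mu(x,\gamma)+\tfrac{\delta\beta^2}{2}\sigma^2(x,\theta)+g(x,\alpha)=\mathrm{const}$, which is precisely the relation the paper obtains from Equations (\ref{prf:2.2})--(\ref{prf:2.3}) and which contradicts the linear-uncorrelatedness hypothesis.

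The gap is in the middle step you yourself flag, and it is real as written. First, $F$ is only assumed to be a strictly monotone distribution function, so $G=\log F$ need not be differentiable and $\kappa=\lim_{z\to-\infty}G'(z)$ need not exist; your entire symmetry-centre argument runs through $G'$. Second, the claim ``a function cannot have two distinct additive symmetry centres unless $G'$ is affine'' is false as stated: two centres only force $G'(w)-G'(w+h)$ to be a constant $d$, which makes $G'$ affine \emph{plus} an $h$-periodic function; you need the extra facts that $G'\ge 0$ (since $F$ is increasing) and $G\le 0$ to conclude $d=0$ and then that the periodic part integrates to zero, contradicting strict monotonicity. The paper sidesteps both issues with Lemma \ref{lem:4}, which characterises when the ratio $\phi\{(y-\mu)/\sigma\}/[\phi\{(y-\mu')/\sigma\}F(\alpha'+\beta'y)]$ has a finite positive limit purely in terms of $\lim_{z\to-\infty}F(z)/e^{\delta z}=c$; since the admissible $\delta$ and $c$ are intrinsic to $F$ (two distinct rates cannot both give finite positive limits), they cannot depend on $x$, and equating them at a reference point $x_0$ yields the forbidden linear relation with no smoothness assumptions. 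A derivative-free repair of your route is also available: subtracting the affine identities at two covariate values gives $G(w+h)-G(w)$ equal to a constant for $h=s_{x_2}-s_{x_1}$, and letting $w\to+\infty$ forces that constant to be $0$ and hence $h=0$ by strict monotonicity of $F$. With either repair your argument closes and coincides with the paper's conclusion.
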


In the above, (a), (b) and (c) are parallel with the results of Theorem \ref{thm:NF}.
Result (d) gives another sufficient condition for improving identifiability by observed covariates $X$, when Condition \ref{condi:A} does not hold. Results (d) does not allow the function $g(\cdot, \alpha)$ in the missing mechanism be linearly correlated with the function pair $\{   \mu(\cdot, \gamma), \sigma^2(\cdot, \theta) \} $, but it allows for dependence between the mean function $ \mu(\cdot, \gamma)$ and the variance function $\sigma^2(\cdot, \theta).$
We illustrate Theorem \ref{thm:NFx} by the following three examples.

\begin{example}\label{ex:probit}
Assume that
\[
Y|x  \thicksim N(\gamma_0+\gamma_1x, \sigma^2) ,\quad  P(R=1|x, y )=\Phi(\alpha_0+\alpha_1x+\beta y) .
\]
The Probit missing mechanism satisfies Condition \ref{condi:A} in Theorem \ref{thm:NFx}, and thus this model is identifiable. We can verify that this model is equivalent to the Heckman Selection Model \citep{heckman1979sample} up to different parametrizations. 
\end{example}

Contrary to the Probit missing mechanism, the Logisitic missing mechanism does not satisfy Condition \ref{condi:A} in Theorem \ref{thm:NFx} as pointed out before.
Many researchers \citep[e.g.,][]{greenlees1982imputation,glynn1986selection,qin2002estimation,robins1997analysis}
use the Logistic missing mechanisms when the outcome is MNAR.
However, it should be noted that
the Logistic missing mechanism is not identifiable even when some completely observed covariates are available,  as shown in the following example. 

\begin{example} \label{ex:logitx}
Assume that
$$
Y|x \thicksim N(\gamma_0+\gamma_1x, \sigma^2), \quad 
\logit~ P(R=1|x,y)=\alpha_0+\alpha_1x+\beta y. 
$$
Let $\sigma^2=1$ and $\gamma_1=0.5$.
Then the two different sets of parameters, $(\gamma_0,\alpha_0,\alpha_1,\beta)=(0,-2,-1,2)$ and $(2,2,1,-2)$,
lead to the same observed distribution. Therefore, the model above with the Logistic missing mechanism is not identifiable.
\end{example}

\begin{example} \label{ex:linear}
Assume that 
$$
Y|x  \thicksim N(\gamma_0+\gamma_1x, \sigma^2) ,\quad P(R=1|x,y)  = F(\alpha_0+\alpha_1 x + \alpha_2 x^2 + \beta y),
$$
where $\alpha_2 \neq 0.$ Without any assumptions on $F(\cdot)$, the missing mechanism satisfies the condition in Theorem \ref{thm:NFx}(d), where $X$ has a linear impact on $Y$ but a quadratic impact on its missingness.  Thus all parameters are identifiable, even though Condition \ref{condi:A} of Theorem \ref{thm:NFx} may not be satisfied. For instance, even if $F(\cdot)$ is the Logistic distribution function, the model is still identifiable with covariates $X$. Therefore, the non-identifiability problems in Example \ref{ex:logit} and \ref{ex:logitx} can be alleviated.
\end{example}

Note that in Example \ref{ex:logitx}, although $(\gamma_0,\alpha_0,\alpha_1,\beta)$ are not identifiable, the value of $\gamma_1$ is unique, which means that the slope of the outcome model is identifiable, but the intercept is not. We will show in the following theorem that this holds in general.

\begin{theorem}\label{thm:slope}
Assume that
\[
Y|x  \thicksim N\{  \mu(x,\gamma) ,  \sigma^2\}  ,  \quad   P(R=1|x, y)=F\{g(x,\alpha)+\beta y\}.
\]
Then the partial derivatives  $\partial \mu(x,\gamma)/\partial x$ and $|\partial g(x,\alpha)/ \partial x|$ are always identifiable.
\end{theorem}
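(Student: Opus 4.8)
The plan is to reduce to the no-covariate Theorem~\ref{thm:NF} fiberwise and then track the single remaining ambiguity. For brevity write $\mu(x)=\mu(x,\gamma)$ and $g(x)=g(x,\alpha)$, and let $(\mu',g',\beta',\sigma'^2)$ denote any competing configuration producing the same observed sub-density $\sigma^{-1}\phi\{(y-\mu(x))/\sigma\}\,F\{g(x)+\beta y\}$ for every $(x,y)$. At each fixed $x$ this is exactly model~(\ref{mis}) with mean $\mu(x)$, variance $\sigma^2$, intercept $g(x)$ and slope $\beta$, so Theorem~\ref{thm:NFx}(a) gives $\sigma'^2=\sigma^2$ and $|\beta'|=|\beta|$. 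Since $\beta$ is a scalar, $\beta'=\varepsilon\beta$ for a single global sign $\varepsilon\in\{+1,-1\}$. If $\varepsilon=+1$, equating the two sub-densities, taking logarithms and comparing the tail on which $F\to 1$ forces first $c(x)=0$ and then $g'(x)=g(x)$ by strict monotonicity of $F$, whence $\mu'(x)=\mu(x)$ for every $x$; both partial derivatives then coincide trivially.

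The substantive case is $\varepsilon=-1$. Here I would equate $\phi\{(y-\mu(x))/\sigma\}\,F\{g(x)+\beta y\}=\phi\{(y-\mu'(x))/\sigma\}\,F\{g'(x)-\beta y\}$, take logarithms, and differentiate in $y$. Writing $\psi=(\log F)'=f/F$ for the density $f=F'$, and setting $s(x)=g(x)+g'(x)$ and $c(x)=\{\mu(x)-\mu'(x)\}/\sigma^2$, the substitution $u=g(x)+\beta y$ reduces the identity to the functional equation $\psi(u)+\psi(s(x)-u)=-c(x)/\beta$, required to hold for all $u$ and every $x$. The constant is then pinned down by the right tail: letting $u\to+\infty$ gives $\psi(u)\to 0$ and $\psi(s(x)-u)\to\psi(-\infty)$, so $-c(x)/\beta=\psi(-\infty)$ independently of $x$. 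Consequently $\mu(x)-\mu'(x)=-\sigma^2\beta\,\psi(-\infty)$ is constant in $x$, whence $\partial\mu(x)/\partial x=\partial\mu'(x)/\partial x$ and the first derivative is identified.

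For the second derivative I would argue that $s(x)$ must also be constant. If $s_1$ and $s_2$ both solve $\psi(u)+\psi(s-u)=\psi(-\infty)$ for all $u$, then $\psi(s_1-u)=\psi(s_2-u)$, i.e. $\psi$ is periodic with period $s_1-s_2$; but $\psi=(\log F)'$ has distinct limits at $\pm\infty$, so it is nonconstant yet cannot be periodic, while a constant $\psi$ would make $F$ exponential, impossible for a distribution function on $\mathbb{R}$. Hence the solution $s^{*}$ is unique and $x$-independent, so $g'(x)=s^{*}-g(x)$, giving $\partial g'(x)/\partial x=-\partial g(x)/\partial x$ and identifying $|\partial g(x)/\partial x|$. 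Together with the previous paragraph this covers both signs of $\varepsilon$.

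The main obstacle I anticipate is the functional-equation analysis in the flip case, specifically justifying the tail limits rigorously: this requires $F$ to admit a density so that $\psi$ exists and possesses limits at $\pm\infty$ with $\psi(+\infty)=0$. A delicate point is that $\psi(-\infty)$ may be infinite --- precisely the situation under Condition~\ref{condi:A}, e.g. the Probit mechanism, where $f(v)/F(v)\to\infty$ as $v\to-\infty$ --- in which case no finite constant $-c(x)/\beta$ exists and the flip case is vacuous, consistent with Theorem~\ref{thm:NF}(c). When $\psi(-\infty)$ is finite (as for the Logistic mechanism, where it equals $1$ and forces $s^{*}=0$, matching Example~\ref{ex:logitx}), the argument above applies. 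Care is finally needed to state the conclusion at the level of the $x$-derivatives of $\mu(\cdot,\gamma)$ and $g(\cdot,\alpha)$ rather than the parameters, since the intercepts of $\mu$ and $g$ genuinely remain unidentified, as Example~\ref{ex:logitx} shows.
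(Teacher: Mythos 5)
Your proof is correct in substance and arrives at exactly the two structural facts the paper's proof rests on --- in the sign-flip case $\beta'=-\beta$, the difference $\mu(x,\gamma)-\mu(x,\gamma')$ and the sum $g(x,\alpha)+g(x,\alpha')$ are both $x$-free constants determined by the left tail of $F$ --- but you get there by a genuinely different route. The paper simply reuses the constraints (\ref{prf:2.1})--(\ref{prf:2.3}) already extracted in the proof of Theorem \ref{thm:NFx} from Lemma \ref{lem:4}: the exponent $\delta$ in $\lim_{z\rightarrow-\infty}F(z)/e^{\delta z}=c$ is a property of $F$ alone (and is unique when it exists), hence independent of $x$, which immediately gives $\mu(x,\gamma)-\mu(x,\gamma')=-\delta\sigma^2\beta$ and forces $g(x,\alpha)+g(x,\alpha')$ to be constant. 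You instead differentiate the logarithm of the density identity in $y$ and analyze the reverse hazard $\psi=f/F$ through the functional equation $\psi(u)+\psi(s(x)-u)=-c(x)/\beta$; your $\psi(-\infty)$ is precisely the paper's $\delta$. Your version is self-contained and more illuminating --- it explains transparently why the flip case is vacuous under Condition \ref{condi:A} and why the Logistic mechanism forces $s^{*}=0$, matching Example \ref{ex:logitx} --- but it costs extra regularity: the theorem assumes only that $F$ is a strictly monotone distribution function, whereas you need $F$ to admit a density with $f(u)/F(u)\rightarrow 0$ as $u\rightarrow+\infty$ and a (possibly infinite) limit at $-\infty$, assumptions the Lemma \ref{lem:4} tail-ratio argument avoids by never differentiating $F$. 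Your periodicity argument for the uniqueness of $s^{*}$ is a nice touch but is unnecessary on the paper's route, where the constancy of $g(\cdot,\alpha)+g(\cdot,\alpha')$ falls out of (\ref{prf:2.2})--(\ref{prf:2.3}) directly.
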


Going back to the linear model $Y|x \thicksim N(\gamma_0+\gamma_1x ,  \sigma^2)$ in Example \ref{ex:linear}, Theorem \ref{thm:slope} says that even if $\alpha_2=0$, the coefficients of $X$ in the outcome model, $\gamma_1$, are always identifiable, although the intercept $\gamma_0$ may not be identifiable. The conclusion of Theorem \ref{thm:slope} can be further strengthened if $\mu(x, \gamma)$ and $g(x, \alpha)$ are both linear.

\begin{corollary}\label{cor:logit}
Assume that 
\[
Y|x \thicksim N( \gamma_0+x^T\gamma_1 ,  \sigma^2), \quad   
P(R=1|x, y)=F(\alpha_0+x^T\alpha_1+\beta y).
\]
Assume that Condition \ref{condi:A} fails, i.e., $\lim_{z\rightarrow -\infty}F(z)/e^{\delta z}=c$
for some $\delta>0$ and some $c\in(0,+\infty)$. 
Define $\tau_1=2\delta(\alpha_0+\beta\gamma_0)+\delta^2\sigma^2\beta^2+2\log(c)$ and $\tau_2=\alpha_1+\beta\gamma_1$. Then
\begin{enumerate}
\item[(a)] for a general distribution function $F(\cdot)$,  $\tau_1\neq 0$ or $\tau_2\neq 0$ is a sufficient condition for identifiability of  $(\gamma_0,\gamma_1,\sigma^2,\alpha_0,\alpha_1)$ and $\beta$;

\item[(b)] if $F(\cdot)$ is the Logistic distribution function, $\tau_1\neq 0$ or $\tau_2\neq 0$ is a necessary and sufficient condition for identifiability of 
$(\gamma_0,\gamma_1,\sigma^2,\alpha_0,\alpha_1)$ and $\beta$.

\end{enumerate}
\end{corollary}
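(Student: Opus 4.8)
The plan is to reduce the problem to a two-way sign ambiguity and then read off $\tau_1,\tau_2$ from the tail of the observed density. Writing the observed density as
\[
p(y\mid x)=\frac{1}{\sigma}\,\phi\!\left(\frac{y-\gamma_0-x^{T}\gamma_1}{\sigma}\right)F(\alpha_0+x^{T}\alpha_1+\beta y),
\]
suppose a second parameter set, marked with primes, produces the same $p(y\mid x)$ for all $(x,y)$. By Theorem~\ref{thm:NFx}(a) we already have $\sigma^2=\sigma'^2$ and $|\beta|=|\beta'|$, and by Theorem~\ref{thm:slope} we have $\gamma_1=\gamma_1'$ and $|\alpha_1|=|\alpha_1'|$. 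In the informative case $\beta\neq0$ only the sign of $\beta$ remains free, so I would split into the two cases $\beta'=\beta$ and $\beta'=-\beta$.

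The engine of the argument is a tail expansion of $\log p(y\mid x)$ in the direction where $\beta y\to-\infty$ (that is, $y\to-\infty$ when $\beta>0$ and $y\to+\infty$ when $\beta<0$). Since Condition~\ref{condi:A} fails, the hypothesis gives $\log F(z)=\delta z+\log c+o(1)$ as $z\to-\infty$, while $\log F(z)=o(1)$ as $z\to+\infty$ because $F$ is a distribution function. Substituting these into $\log p(y\mid x)$ and collecting powers of $y$ for each fixed $x$ yields an asymptotic form $\log p(y\mid x)=-y^2/(2\sigma^2)+\kappa_1(x)\,y+\kappa_0(x)+o(1)$; since two such expansions can agree only if their coefficients agree, I can match the linear coefficient $\kappa_1(x)$ and the constant term $\kappa_0(x)$ between the two parameterizations.

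For part (a), in the case $\beta'=\beta$ both densities have $F\to0$ along this tail, and matching $\kappa_1(x)$ and $\kappa_0(x)$ forces $\gamma_0=\gamma_0'$, $\alpha_0=\alpha_0'$ and $\alpha_1=\alpha_1'$, i.e.\ full equality. In the flip case $\beta'=-\beta$ the primed factor obeys $F'\to1$ along the same tail, so its linear contribution disappears; matching the coefficient of $y$ forces $\gamma_0'=\gamma_0+\delta\beta\sigma^2$ (and re-confirms $\gamma_1'=\gamma_1$), and matching the $x$-linear and constant parts of $\kappa_0(x)$ reduces, after substituting this relation, to precisely $\delta\tau_2=0$ and $\tau_1=0$. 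Hence a flip solution can reproduce $p(y\mid x)$ only when $\tau_1=\tau_2=0$, so if $\tau_1\neq0$ or $\tau_2\neq0$ only $\beta'=\beta$ survives and all parameters coincide, which proves (a).

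For part (b) I must also show the converse when $F$ is Logistic, for which $\delta=1$ and $c=1$. When $\tau_1=\tau_2=0$ I would exhibit the explicit flip $(\gamma_0',\gamma_1',\sigma'^2,\alpha_0',\alpha_1',\beta')=(\gamma_0+\beta\sigma^2,\gamma_1,\sigma^2,-\alpha_0,-\alpha_1,-\beta)$ and verify $p'(y\mid x)=p(y\mid x)$ identically, using the Logistic identity $\{1-F(A)\}/F(A)=e^{-A}$ with $A=\alpha_0+x^{T}\alpha_1+\beta y$ together with the relations $\tau_1=0,\tau_2=0$; since $\beta'=-\beta\neq\beta$ this is a genuinely distinct parameter set, so the model is not identifiable, mirroring the concrete pairs in Examples~\ref{ex:logit} and \ref{ex:logitx}. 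I expect the main obstacle to be the rigor of the tail matching in the flip case: one must carefully separate the quadratic, linear-in-$y$, and $x$-dependent constant pieces, confirm that the $o(1)$ remainders vanish so that coefficients may be equated term by term, and track how the single relation $\gamma_0'=\gamma_0+\delta\beta\sigma^2$ feeds into the constant term to produce exactly the combination defining $\tau_1$. The general-$F$ analysis yields only the necessary direction; the sufficiency of $\tau_1=\tau_2=0$ for non-identifiability in (b) genuinely needs the exact algebraic identity special to the Logistic link, which is why (b) is confined to that case.
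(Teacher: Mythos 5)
Your proposal is correct and follows essentially the same route as the paper: reduce to the sign-flip case $\beta'=-\beta$ via the earlier identifiability results, extract the relations $\gamma_0'=\gamma_0+\delta\beta\sigma^2$, $\alpha_1'=-\alpha_1$, $\delta\tau_2=0$ and $\tau_1=0$ from the tail behavior $F(z)\sim ce^{\delta z}$ (the paper does this by citing equations (\ref{prf:2.1})--(\ref{prf:2.4}) from the proof of Theorem \ref{thm:NFx}, which encode exactly your coefficient matching of the linear-in-$y$, $x$-linear, and constant terms), and then verify the explicit Logistic flip for necessity in part (b). The only difference is presentational: you re-derive the tail constraints directly from a $\log$-expansion rather than reusing the intermediate identities already established in the proof of Theorem \ref{thm:NFx}.
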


The vector $\tau_2$ in the corollary above can be interpreted as the ``total impact'' of covariates $X$ on the missingness of the outcome $Y$. However, the scalar $\tau_1$ does not enjoy an apparent interpretation. From Corollary \ref{cor:logit}, the parameters are not identifiable only in a small subset of the parameter space, i.e., $\tau_1=0$ and $\tau_2=0$.  In fact, as shown in the proof of Corollary \ref{cor:logit}, even if $\tau_1=0$ and $\tau_2=0$, there are at most
two possible sets of parameters with the same observed distribution, and they must satisfy:
$\gamma_1'=\gamma_1, \beta'=-\beta,$ and $ \alpha_1'=-\alpha_1$.
In practice, we may select one of the parameter sets based on our domain knowledge. From these equations, we can see that the absolute values of the vector $\alpha_1$ are identifiable, implying that the absolute values of the coefficients of the covariates on the response probability are identifiable.  Moreover, we can see from the above discussion that the parameters $\tau_1$ and $\tau_2$ are always identifiable. Therefore, we can consistently estimate them and test whether they are equal to zero. As a result, we can test the necessary and sufficient condition in (b)
for the identifiability of the models based on the observed data.

\section{\bf Normal and \lowercase{$t$} mixtures with nonignorable missing data}\label{sec:MN}

In this section, we discuss the identifiability of the normal  mixture and $t$ mixture outcomes with nonignorable missing data.
These mixture distributions are useful to model outcomes with multiple modes or heavy tails.
We first consider the normal mixture model
\begin{gather}
Y\thicksim \sum_{k=1}^{K}\pi_k N(\mu_k,\sigma_k^2),\quad
\sum\limits_{k=1}^K \pi_k=1,\quad   
\pi_k \geq0,\quad 
k=1,\ldots,K.\label{otc:MN}
\end{gather}
The model above includes mixtures in both location and scale parameters. 
If $\mu_k=\mu$ for all $k$, it is a scale mixture; and if $\sigma_k=\sigma$ for all $k$, it is a location mixture. 
As in Section \ref{sec:2.1}, we assume that the missing mechanism is model (\ref{mis}), where
$F(\cdot)$ is a strictly monotone distribution function. It satisfies some of Condition \ref{condi:A} discussed before, and Conditions \ref{condi:B} and \ref{condi:C} below.

\begin{condition}
\label{condi:B}
For any $\theta_0 \in (-\infty, +\infty), \theta_1 >0, \delta_1,\delta_2\geq 0$, and $\delta_1+\delta_2\neq 0$, the limit $\lim_{z \rightarrow +\infty}\{F(\theta_0 +\theta_1 z)-F(y)\}/e^{-\delta_1 z^2-\delta_2 z}=0\text{ or }\infty$.
\end{condition}

\begin{condition}
\label{condi:C}
For any $\theta_0\in (-\infty, \infty), \theta_1>0,$ and $ M>0$, at most one of
 $
 \lim_{z\rightarrow+\infty}  z^M \{F(\theta_0 +\theta_1 z) - F(z)\}
 $
 and
 $
 \lim_{y\rightarrow-\infty}  z^M  \{1-F(\theta_0 +\theta_1 z)/F(z)\} 
 $
is finite and positive.
\end{condition}

Condition \ref{condi:B} requires the tail behavior of $ F(\alpha+\beta z)-F(z)$ be different from any normal or exponential density. It is straightforward to verify that the commonly-used Probit and Robit missing data mechanisms satisfy Condition \ref{condi:B}, but the Logistic missing mechanism does not. Condition \ref{condi:C} holds for all Logistic, Probit and Robit missing mechanisms.

We have the following result on the identifiability of normal mixture models.

\begin{theorem}\label{thm:MNF}
Suppose that the outcome model is a normal mixture (\ref{otc:MN}) with an unknown $K$ and $ \pi_k$'s, and  the missing mechanism is (\ref{mis}) with a known $F(\cdot)$ satisfying Conditions \ref{condi:A} and \ref{condi:B}.
Then all the parameters $K$, $\{ (\pi_k,\mu_k, \sigma_k^2): k=1,\ldots,K\}$, $\alpha$ and $\beta$ are identifiable.
\end{theorem}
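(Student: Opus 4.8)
The plan is to regard the observed density $g(y)=\big\{\sum_{k=1}^{K}\pi_k\,\sigma_k^{-1}\phi\big((y-\mu_k)/\sigma_k\big)\big\}\,F(\alpha+\beta y)$ as a finite positive combination of the building blocks $\psi_{\mu,\sigma,\alpha,\beta}(y)=\sigma^{-1}\phi\big((y-\mu)/\sigma\big)\,F(\alpha+\beta y)$ and to establish that two such combinations can agree for all $y$ only when their terms agree up to relabeling. Suppose a second set $\big(K',\{(\pi_j',\mu_j',(\sigma_j')^2)\},\alpha',\beta'\big)$ produces the same $g$; writing $f,f'$ for the two mixture densities, the governing equation is $f(y)\,F(\alpha+\beta y)=f'(y)\,F(\alpha'+\beta' y)$ for all $y$. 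Because $F$ is a distribution function with full support, both selection factors are strictly positive everywhere, so the whole argument reduces to proving linear independence of the building blocks, which I would do by an induction that peels off mixture components in decreasing order of variance.

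First I would pin down the sign of $\beta$. Ordering the components of each mixture lexicographically by variance and then by mean, the extreme tail of each $f$ is governed by its largest-variance component, whose log contributes the leading term $-y^2/(2\sigma_{\max}^2)$. In the tail where the selection factor tends to $1$ (say $y\to+\infty$ when $\beta>0$), $g$ is asymptotic to $f$, so the two representations must share the same dominant Gaussian; in the opposite tail the factor $\log F(\alpha+\beta y)$ becomes active. Exactly as in the single-normal case of Theorem \ref{thm:NF}(c), the only way a sign flip of $\beta$ could be compensated is by an asymptotically linear $\log F$ being absorbed into a Gaussian mean term, and Condition \ref{condi:A} rules this out by excluding the borderline exponential left tail of $F$; this is the very mechanism behind the non-identifiability of the Logistic model. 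Hence $\beta$ and $\beta'$ have the same sign, and I may assume $\beta,\beta'>0$ and read off from the right tail that the largest variances and the associated dominant Gaussians agree.

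The mixture-specific difficulty, and the reason Condition \ref{condi:B} is needed, is to upgrade ``the dominant Gaussians agree'' to ``the selection weights agree.'' Once a common Gaussian shape $\phi_{\mu,\sigma}$ has been matched at the top of the hierarchy, the residual contribution is, to leading order, a multiple of $\phi_{\mu,\sigma}(y)\{F(\alpha+\beta y)-F(\alpha'+\beta' y)\}$; after the substitution $z=\alpha'+\beta' y$ this is a Gaussian times $F(\theta_0+\theta_1 z)-F(z)$ with $\theta_1=\beta/\beta'>0$ and $\theta_0=(\alpha\beta'-\alpha'\beta)/\beta'$. If $(\alpha,\beta)\neq(\alpha',\beta')$ this difference is not identically zero, and Condition \ref{condi:B} guarantees that its tail is of neither Gaussian nor exponential type, so it cannot be cancelled by any finite combination of the remaining Gaussian-times-selection terms. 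This contradiction forces $\alpha=\alpha'$ and $\beta=\beta'$, after which the matched top component can be subtracted from both sides and the induction continued on the remaining components.

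With $(\alpha,\beta)=(\alpha',\beta')$, the common factor $F(\alpha+\beta y)$ cancels from the governing equation and leaves $f(y)=f'(y)$ for all $y$; the classical identifiability theorem for finite normal mixtures then yields $K=K'$ and equality of $\{(\pi_k,\mu_k,\sigma_k^2)\}$ up to permutation, which completes the argument. I expect the main obstacle to lie in the asymptotic bookkeeping underlying the second and third steps: one must track the interaction between a Gaussian tail and the selection-weight tail across all sign combinations of $\beta,\beta'$ and all orderings of the component variances and means, and invoke Conditions \ref{condi:A} and \ref{condi:B} at precisely the right moments. The most delicate scenario is a large-variance Gaussian weighted by a vanishing selection factor (wrong-sign $\beta$), whose effective tail could a priori mimic a smaller-variance Gaussian in the other representation; ruling this out is exactly what Condition \ref{condi:A} accomplishes.
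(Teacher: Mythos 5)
Your proposal is correct and follows essentially the same route as the paper's proof: match the dominant (largest-variance) component through the tails, invoke Condition \ref{condi:A} (via the tail comparison of Lemma \ref{lem:4}) to fix the sign of $\beta$, use Condition \ref{condi:B} to show that the residual $F(\alpha+\beta y)-F(\alpha'+\beta' y)$ cannot be absorbed by the remaining Gaussian-type terms, and finally cancel the common selection factor and appeal to classical normal-mixture identifiability. The only cosmetic difference is that the paper proves the result as a special case of Theorem \ref{thm:MNCondF} and therefore closes with the linear-independence Lemma \ref{lem:5} rather than the classical mixture theorem, but for the mechanism (\ref{mis}), where $F(\alpha+\beta y)$ is common to all components, your shortcut is equivalent.
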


Next we discuss identifiability of the $t$ model and its location mixture. A $t$ random variable can be represented by an infinite scale mixture of normal random variables \citep{little2002statistical, liu2004robit}, and thus the  mixture of $t$ random variables can be viewed as an infinite location and scale mixture of normal random variables. It can accommodate multimodality and heavy-tailedness simultaneously, and we can tune the degrees of freedom and the number of components in the mixture distribution in practice. We consider the outcome model
\begin{gather}
 Y \thicksim \sum_{k=1}^{K}\pi_k T_\nu(\mu_k,\omega^2),\quad
 \sum\limits_{k=1}^K \pi_k=1,\quad  
 \pi_k \geq0,\quad 
 k=1,\ldots,K,\label{otc:MT}
\end{gather}
where $T_\nu(\mu_k,\omega^2)$ is a $t$ random variable with location parameter $\mu_k$, scale parameter $\omega^2$, and degrees of freedom $\nu.$

\begin{theorem}\label{thm:TF}
Suppose that the outcome follows a location mixture of $t$ distributions in (\ref{otc:MT}) with unknown $K$, $ \pi_k$'s and $\nu$, and the missing mechanism is (\ref{mis}) with a known $F(\cdot)$ satisfying Condition \ref{condi:C}. Then all the parameters $K$, $\{( \pi_k,\mu_k): k=1,\ldots,K\} $, $\omega^2, \nu,\alpha$ and $\beta$ are identifiable.
\end{theorem}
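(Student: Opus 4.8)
The plan is to argue by contradiction in the standard identifiability style: suppose two parameter configurations, $(\nu,\omega^2,\{\pi_k,\mu_k\}_{k=1}^K,\alpha,\beta)$ and $(\nu',\omega'^2,\{\pi'_j,\mu'_j\}_{j=1}^{K'},\alpha',\beta')$, yield the same observed sub-density $h(y)=P(y)F(\alpha+\beta y)$, where $P$ denotes the location-$t$-mixture density in (\ref{otc:MT}) with common scale and degrees of freedom. The governing identity is
$$
\Big[\sum_{k}\pi_k t_\nu(y;\mu_k,\omega^2)\Big]F(\alpha+\beta y)=\Big[\sum_{j}\pi'_j t_{\nu'}(y;\mu'_j,\omega'^2)\Big]F(\alpha'+\beta' y)\qquad\text{for all }y,
$$
and the goal is to force the two configurations to coincide. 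The whole argument is driven by the fact that a location mixture of $t_\nu$ densities with common scale $\omega^2$ has a pure power tail, $P(y)\sim B_\nu\,\omega^\nu|y|^{-(\nu+1)}$ as $|y|\to\infty$, with $B_\nu$ a known function of $\nu$ and with the \emph{same} leading constant on both tails because the weights sum to one.

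First I would run a tail-rate analysis. On whichever tail $\beta y\to+\infty$ we have $F\to1$ and $h(y)\sim B_\nu\omega^\nu|y|^{-(\nu+1)}$, while on the opposite tail $F\to0$ makes $h$ decay strictly faster; comparing the two decay exponents of the \emph{observed} $h$ therefore identifies the slower tail, hence $\mathrm{sign}(\beta)$. Reading the exact polynomial rate $-(\nu+1)$ off the slow tail identifies $\nu$, and then the leading constant $B_\nu\omega^\nu$ identifies $\omega^2$. Applying this to both configurations gives $\nu=\nu'$, $\omega=\omega'$, and $\mathrm{sign}(\beta)=\mathrm{sign}(\beta')$, so I may assume $\beta,\beta'>0$. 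At this point both mixture factors have the identical leading tail $B_\nu\omega^\nu|y|^{-(\nu+1)}$, so their ratio tends to $1$ at both $\pm\infty$, and the identity above forces $F(\alpha+\beta y)/F(\alpha'+\beta' y)\to1$ at both tails.

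The crux, and the step I expect to be the main obstacle, is upgrading these leading-order matches to $\beta=\beta'$ and $\alpha=\alpha'$; this is exactly what Condition \ref{condi:C} is engineered to deliver. I would expand each mixture one order further, $P(y)=B_\nu\omega^\nu|y|^{-(\nu+1)}\{1+(\nu+1)\bar\mu/y+O(1/y^2)\}$ with $\bar\mu=\sum_k\pi_k\mu_k$, so that any mismatch between the two configurations produces a discrepancy of exact order $1/y$ in the ratio $P'(y)/P(y)$. After the substitution $z=\alpha'+\beta' y$ (so the two $F$-arguments read $\theta_0+\theta_1 z$ and $z$ with $\theta_1=\beta/\beta'>0$), matching this $1/y$ discrepancy against $F(\theta_0+\theta_1 z)/F(z)$ on the light tail $z\to-\infty$ and against $F(\theta_0+\theta_1 z)-F(z)$ on the heavy tail $z\to+\infty$ shows that a nonzero discrepancy would force both $z^{M}\{F(\theta_0+\theta_1 z)-F(z)\}$ and $z^{M}\{1-F(\theta_0+\theta_1 z)/F(z)\}$ to be finite and positive for a suitable $M>0$, contradicting Condition \ref{condi:C}. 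Handling the scale mismatch $\beta\ne\beta'$ (the case $\theta_1\ne1$) and the sign bookkeeping in these two limits is where the delicate work lies, since one must rule out the possibility that the mixture's power-law correction is absorbed by a dilation of $F$; Condition \ref{condi:C} is precisely the hypothesis that forecloses this on the two tails at once.

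Once the argument forces the discrepancy to vanish, I obtain $\beta=\beta'$, $\alpha=\alpha'$, and hence $F(\alpha+\beta y)\equiv F(\alpha'+\beta' y)$, so the governing identity collapses to $P(y)\equiv P'(y)$ as functions. It then remains to invoke the classical identifiability of finite location mixtures of a fixed density (here $t_\nu$ with the already-identified common $\nu$ and $\omega^2$): two such mixtures that agree everywhere must share the number of components and the weight--location pairs, yielding $K=K'$ and $\{(\pi_k,\mu_k)\}=\{(\pi'_j,\mu'_j)\}$ up to relabeling. Collecting these facts gives identifiability of all of $K$, $\{(\pi_k,\mu_k)\}$, $\omega^2$, $\nu$, $\alpha$, and $\beta$, completing the proof.
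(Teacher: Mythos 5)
Your proposal follows essentially the same route as the paper's proof: a tail-rate comparison of the polynomial $t$-mixture tails against $F$ to pin down $\nu$, $\omega^2$, and the sign of $\beta$, then a polynomial-order discrepancy between the two mixture factors played off against Condition \ref{condi:C} on both tails to force $\alpha=\alpha'$ and $\beta=\beta'$, and finally classical identifiability of finite location mixtures. The only points the paper treats separately that you gloss over are the degenerate case $\beta=\beta'=0$ (handled by integrating over $y$ to get $F(\alpha)=F(\alpha')$) and the fact that the leading mixture discrepancy may sit at order $y^{-M}$ for some $M>1$ rather than exactly $1/y$ when $\sum_k\pi_k\mu_k=\sum_k\pi_k'\mu_k'$; neither affects the substance of the argument.
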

As a special case of the $t$ mixture with $K=1$, the parameters are also identifiable if the outcome model follows a single $t$ distribution.

\section{\bf Latent monotone missing mechanism}

In this section, we still assume that the outcome $Y$ comes from a normal mixture distribution with $K$ components  indexed by parameters $(\mu_k,\sigma_k^2)$  and mixing proportions $\pi_k$ for $k=1, \ldots, K$. Note that $K$ and $ \pi_k$'s may be unknown. We further allow the missing mechanism to depend on the indicator, $G$, of the latent components of the mixture model
\begin{align}
P(R=1|y, G =k)=F(\alpha_k+\beta_k y ), \label{mis:LF} 
\end{align}
where $F(\cdot)$ is a known distribution function, $\alpha_k$ and $\beta_k$ are unknown parameters depending on latent groups. 
Unfortunately, this general missing mechanism is lack of identifiability. Even for the case with $\beta_k=0$, i.e., a latent ignorable missing mechanism \citep{frangakis1999addressing}, the model is not identifiable as illustrated in the following example.

\begin{example}
Assume that 
\begin{gather*}
Y\thicksim \pi_1  N( 0,  1)+ (1-\pi_1)  N(  0,  4),\quad 
P(R=1|y, G=k) = F(\alpha_k)\quad  (k=1,2).
\end{gather*}
Then any two models with $\pi_1 F(\alpha_1)=(1-\pi_1) F(\alpha_2)=1/4$ result in the same observed distribution.
\end{example}

Therefore, we should impose some restrictions on model (\ref{mis:LF}). As a motivating example, wage levels vary among different cities or communities, and in a survey an individual will decide to respond or not based on the average wage level of his city or community. In other words, it is the wage deviation of an individual from the city's average level that influences the response probability directly. The variance of the wages, a measure of the gap between the rich and the poor people, may also influence the response probability. Based on this intuition, we assume the following latent monotone missing mechanism, which depends on the mean and variance of the latent group:
\begin{gather}
P(R=1|y,G=k)=F\left[\frac{\alpha \psi(\sigma_k)+\beta \{y-\kappa(\mu_k)\}}{\varphi(\sigma_k)}\right], \label{mis:CondF}
\end{gather}
where $\psi(\cdot)$, $\kappa(\cdot)$ and $\varphi(\cdot)$ are known functions. We require that $\kappa(\cdot)$ and $\varphi(\cdot)$ be increasing, and  $\psi(\cdot)$ and $\varphi(\cdot)$ be positive. The common parameters $\alpha$ and $\beta$ represent similar missing data patterns in different latent groups. Given the component indicator of the mixture distribution, the missing mechanism (\ref{mis:LF}) is monotone in the outcome. Averaged over the latent components, however, the missing mechanism may not be monotone.

\begin{example}\label{eg::latent-monotone}
If $\psi(\sigma)=1, \varphi(\sigma) = 1$ and $\kappa(\mu)=0$, we have the monotone missing mechanism
$
P(R=1|y,G=k)=F(\alpha+\beta y),
$
as discussed in (\ref{mis}) and Sections \ref{sec:2} and \ref{sec:MN}.
\end{example}
\begin{example}
If $\psi(\sigma)=\sigma, \varphi(\sigma)=\sigma$ and $\kappa(\mu) = \mu$, we have
$$
P(R=1| y, G=k) = F\left(  \alpha + \beta \frac{y - \mu_k}{\sigma_k}  \right).
$$
The missing mechanism depends on the deviation from the center of each component $(y-\mu_k)/\sigma_k$, and the missing proportions are the same for all components.
\end{example}
\begin{example}
If $\psi(\sigma)=1, \varphi(\sigma)=\sigma$ and $\kappa(\mu) = \mu$, we have
$$
P(R=1| y, G=k) = F\left(  \frac{\alpha}{\sigma_k} + \beta \frac{y - \mu_k}{\sigma_k}  \right).
$$
The missing mechanism depends on the deviation from the center and the variance of each component, and the missing proportions are larger for components with larger variances. If $\mu_k = \mu$ for all components, then we have a scale mixture of normal distributions. The resulting marginal outcome distribution is a $t$ distribution, if as $K$ increases the empirical distribution of $\sigma_k$ approaches a scaled-inverse-$\chi^2$ distribution. We can verify that, under the above missing mechanism with $F(\cdot)=\Phi(\cdot)$, the model is equivalent to the selection-$t$ model \citep{marchenko2012heckman}.
\end{example}
\begin{example}
If $\psi(\sigma)=1, \varphi(\sigma)=\sigma^2$ and $\kappa(\mu) = \mu$, we have
$$
P(R=1| y, G=k) = F\left(  \alpha \sigma_k + \beta  \frac{ y - \mu_k }{\sigma_k}  \right).
$$
The missing mechanism depends on the deviation from the center and the variance of each component, and the missing proportions are smaller for components with larger variances.
\end{example}

In the above examples, we suggested several candidate missing mechanisms that are easy to interpret. Practitioners can choose their own missing mechanisms according to their background knowledge, as long as the functions $\{  \psi(\cdot), \varphi(\cdot), \kappa(\cdot) \} $ satisfy the conditions above. 
Below we show the identifiability of model (\ref{otc:MN}) under the missing mechanism (\ref{mis:CondF}).

\begin{theorem}\label{thm:MNCondF}
Suppose that the outcome model is a normal mixture (\ref{otc:MN}) with unknown $K$ and $ \pi_k$'s, and  the missing mechanism is (\ref{mis:CondF}). If $F(\cdot)$ is known and satisfies Conditions \ref{condi:A} and \ref{condi:B}.
then the parameters $K$, $ \{(\pi_k,\mu_k,\sigma_k^2): k=1,\ldots, K\}$, $\alpha$ and $\beta$ are identifiable.
\end{theorem}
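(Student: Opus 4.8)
The plan is to peel the latent mechanism (\ref{mis:CondF}) into a family of component-specific monotone mechanisms, identify the components by a tail argument, and only then recover $\alpha$ and $\beta$ from the known link functions. Averaging over the latent indicator $G$, I would write the observed distribution as
\[
P(y,R=1)=\sum_{k=1}^{K}\pi_k\,\phi_k(y)\,F(a_k+b_k y),\qquad
b_k=\frac{\beta}{\varphi(\sigma_k)},\quad
a_k=\frac{\alpha\psi(\sigma_k)-\beta\kappa(\mu_k)}{\varphi(\sigma_k)},
\]
where $\phi_k$ denotes the $N(\mu_k,\sigma_k^2)$ density. The structural point is that each component now carries its own index $a_k+b_k y$, so the clean factorization underlying Theorem \ref{thm:MNF} is lost; on the other hand, since $\varphi(\cdot)>0$ every $b_k$ shares the sign of $\beta$, so in each tail either all the factors $F(a_k+b_k y)$ tend to $1$ or all of them lie in the non-exponential left tail of $F$ controlled by Condition \ref{condi:A}.

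The first substantive step is to identify the \emph{extended} parameters $\{(\pi_k,\mu_k,\sigma_k^2,a_k,b_k):k=1,\dots,K\}$ and $K$, treating $(a_k,b_k)$ as if unconstrained. Suppose a second admissible parameter set produces the same function $P(y,R=1)$. I would adapt the peeling argument behind Theorem \ref{thm:MNF}: that argument compares the two representations tail-by-tail and matches terms through their leading Gaussian decay, and it does not require the indices to coincide across components, so it should carry over to the present per-component indices. Under Condition \ref{condi:A} the sign of $\beta$ is fixed, exactly as in Theorem \ref{thm:NF}(c), and each term $\pi_k\phi_k(y)F(a_k+b_k y)$ acquires a well-defined leading decay rate determined by $1/(2\sigma_k^2)$ together with the contribution of $F$ whenever its index runs to $-\infty$. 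In the tail where the $F$-factors tend to $1$ the slowest-decaying term reveals $(\pi_k,\mu_k,\sigma_k^2)$ of one component; in the opposite tail the decay of $F$ then reveals the corresponding $(a_k,b_k)$. Matching, subtracting, and inducting on the number of components identifies the full extended parameter set, with Condition \ref{condi:B} guaranteeing that the residual differences $F(\theta_0+\theta_1 z)-F(z)$ created by each subtraction cannot imitate a Gaussian rate and hence cannot spuriously merge or split components.

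With the extended parameters in hand, the last step recovers $\alpha$ and $\beta$ algebraically from the known links. From $b_k=\beta/\varphi(\sigma_k)$ and the identified $\sigma_k$, positivity of $\varphi$ gives $\beta=b_k\varphi(\sigma_k)$, necessarily the same value for every $k$; substituting into $a_k\varphi(\sigma_k)=\alpha\psi(\sigma_k)-\beta\kappa(\mu_k)$ and using $\psi>0$ gives $\alpha=\{a_k\varphi(\sigma_k)+\beta\kappa(\mu_k)\}/\psi(\sigma_k)$, again independent of $k$. The positivity of $\psi,\varphi$ together with the monotonicity of $\kappa,\varphi$ makes these relations consistent and uniquely solvable, so $(\alpha,\beta)$ are determined; combined with the already-identified $K$ and $\{(\pi_k,\mu_k,\sigma_k^2)\}$ this gives identifiability.

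I expect the main obstacle to be the tail comparison inside the peeling step. In contrast to Theorem \ref{thm:MNF}, a high-variance component whose index satisfies $b_k<0$ can, through the decay of $F(a_k+b_k y)$, acquire an effective rate close to that of a lower-variance component, so the ordering of terms by tail weight depends jointly on $\sigma_k$ and on $\beta/\varphi(\sigma_k)$. Guaranteeing that at each stage there is a unique slowest-decaying term, that the two tails can be read off consistently, and that the inductive hypothesis survives each subtraction, is exactly where Conditions \ref{condi:A} and \ref{condi:B} must be used carefully; this bookkeeping is the delicate part of the argument.
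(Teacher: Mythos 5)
Your stage-one claim --- that the extended parameters $\{(\pi_k,\mu_k,\sigma_k^2,a_k,b_k)\}$ can be identified ``as if unconstrained'' --- is false, and this is the crux of the gap. The counterexample the paper gives immediately after introducing the general latent mechanism (\ref{mis:LF}) lives inside your extended class: with $b_1=b_2=0$ and free intercepts, any two models satisfying $\pi_1F(a_1)=(1-\pi_1)F(a_2)=1/4$ over the components $N(0,1)$ and $N(0,4)$ produce the same observed distribution, so $(\pi_k,a_k)$ cannot be separated. This is exactly why the theorem restricts to mechanism (\ref{mis:CondF}) with a \emph{common} $(\alpha,\beta)$ and known links: that constraint must be used inside the identification argument, not only in a final algebraic step. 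Concretely, in the $\beta=0$ case (which the theorem covers) the paper first identifies the products $\pi_{ij}F\{\alpha\psi(\sigma_i)/\varphi(\sigma_i)\}$ via normal-mixture identifiability and then disentangles $\pi_{ij}$ from $\alpha$ using strict monotonicity of $F$, positivity of $\psi,\varphi$, the fact that $\alpha$ is the same in every component, and $\sum_{i,j}\pi_{ij}=1$; none of this machinery survives once the link between $a_k$ and $(\alpha,\sigma_k,\mu_k)$ has been discarded, so your proposal has no route through $\beta=0$.

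Second, even for $\beta\neq 0$ the component-by-component peeling you sketch is not what the paper does, and the step you yourself flag as delicate is precisely the one you have not supplied. In the left tail the decay of $\phi_k(y)F(a_k+b_ky)$ mixes the Gaussian rate $1/(2\sigma_k^2)$ with the tail of $F$ at slope $b_k$ (for Probit, $\Phi(b_ky)\sim e^{-b_k^2y^2/2}$ is itself of Gaussian rate), so a low-variance component can genuinely tie with a high-variance one and your induction need not have a unique slowest-decaying term to peel. The paper avoids any such induction: it extracts only the extreme component $(\sigma_1,\mu_{11},\pi_{11})$ from the right tail, where for $\beta>0$ all $F$-factors are bounded away from zero; it then applies Condition \ref{condi:B} to the single leading index to force $\alpha=\alpha'$ and $\beta=\beta'$; and it concludes in one shot from the linear independence of the full structured family $\{Q_{ij}\}$ (Lemma \ref{lem:5}), whose proof again relies on all components sharing the same $(\alpha,\beta)$. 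Your final algebraic recovery of $(\alpha,\beta)$ from the $(a_k,b_k)$ is fine as far as it goes, but it rests on a first stage that cannot be carried out. (A minor additional point: Theorem \ref{thm:MNF} is itself proved in the paper as the special case $\psi=\varphi=1$, $\kappa=0$ of the present theorem, so there is no independent ``peeling argument behind Theorem \ref{thm:MNF}'' to adapt.)
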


The  missing mechanism (\ref{mis}) is a special case of the  missing mechanism (\ref{mis:CondF}) as shown in Example \ref{eg::latent-monotone}, and therefore Theorem \ref{thm:MNF} can be viewed as a special case of Theorem \ref{thm:MNCondF}. 
Similar to Section 2.2, we can incorporate covariates into  model (\ref{mis:CondF}), and establish identifiability results for the parameters conditional on the covariates.

\section{\bf Simulation Studies}

In this section, we use simulations to evaluate the finite sample properties of the models discussed in the previous sections. The data generating models are identifiable according to the theorems and corollaries in the previous sections.
For each model, we simulate $1000$ independent data sets under sample sizes $500$ and $1500$, and summarize the results with boxplots.

\subsection{Normal outcomes}
We generate the outcome variable $Y\thicksim N(0,4)$, and choose Probit, Logit, and Robit (with $\nu=2,16$) missing mechanisms.
We choose $\alpha=1$ and $\beta=1$ or $2$ for the missing mechanisms. 
For these settings, the missing data proportions  are between $30\%$ and $40\%$. For each data set, we apply various missing mechanisms to estimate the parameters, including Probit, Logistic, and Robit models with unknown degrees of freedom.

We show the simulation results only for $\beta=2$ in Figure \ref{fig:simu5.1}, and those for $\beta=1$ have similar patterns and are omitted. From Figure \ref{fig:simu5.1}, we can see that misspecification of the missing mechanism has little influence on estimation of the mean of the outcome, although it has some influence on the estimation of the missing mechanism itself.
It is because the quantiles of normal, Logistic, and $t$ distributions are almost linearly correlated over a large range \citep{mudholkar1978remark}. 
When the true missing mechanism is Robit with a small degrees of freedom and the Probit model is used for estimation,
it leads to small biases for estimating $\mu$, but large biases for estimating $\beta$. As the degrees of freedom increase, the biases for both $\mu$ and $\beta$ become smaller, and they can be improved by increasing the sample size.

The Probit model has explicit form of observed likelihoods, but those of the Logistic model and the Robit model involve integrals.  According to the simulation results, and for  computational convenience,
we recommend using the Probit missing mechanism for estimation of the mean of the outcome
if we do not care about the missing mechanism itself.
Otherwise, we recommend conducting sensitivity analysis
using the Robit missing mechanism with different degrees of freedom.

\subsection{Scale mixtures of normal and $t$ outcomes}\label{sec::scale-mixture-simu}
We use two-component scale mixtures of normal and $t$ distributions with $\nu=5$ for the outcome model. We choose both the Probit and latent Probit missing mechanisms.
We generate a covariate $X_1 \thicksim N(1,1)$, let $X=(1,X_1)^T$, and  then generate outcomes from the following models:
\begin{enumerate}
\item Scale  mixtures of normal outcome with the Probit missing mechanism
\[
Y|x  \thicksim \sum_{k=1}^2 \pi_k N(x^T\gamma,\sigma_k^2),\quad P(R=1|x,y)=\Phi\left(x^T\alpha+\beta y\right);
\]

\item Scale mixtures of normal outcome with the latent Probit missing mechanism
\[
Y|x  \thicksim \sum_{k=1}^2 \pi_k N(x^T\gamma,\sigma_k^2),\quad P(R=1|x,y,G=k)=\Phi\left\{\frac{x^T\alpha+\beta (y-x^T\gamma)}{\sigma_k}\right\};
\]

\item  $t$  outcome with the Probit missing mechanism
\[
Y|x,\sigma  \thicksim N(x^T\gamma,\sigma^2),\quad  \sigma^2\thicksim \frac{\omega^2\nu}{\chi^2_\nu},\quad P(R=1|x,y)=\Phi\left(x^T\alpha+\beta y\right);
\]

\item  $t$  outcome with the latent Probit missing mechanism (or, equivalently, the selection-$t$ model)
\begin{eqnarray}
\label{eq::re-para-selection-t}
Y|x,\sigma  \thicksim N(x^T\gamma,\sigma^2),\quad  
\sigma^2\thicksim \frac{\omega^2\nu}{\chi^2_\nu},\quad
P(R=1|x,y,\sigma)=\Phi\left\{\frac{x^T\alpha+\beta (y-x^T\gamma)}{\sigma}\right\}.
\end{eqnarray}
\end{enumerate}

The true parameters for  the above cases 1 and 2 
are  $\pi_1=\pi_2=0.5,\gamma=(1,1)^T,\sigma_1^2=1,\sigma_2^2=4$ and $\alpha=(1,1)^T$. The true parameters for the above cases 3 and 4
are $\nu=5$, $\gamma=(1,-1)^T,\omega^2=1$ and $\alpha=(1,1)^T$.
Further, we set $\beta =-0.5$ and $-1$, which result in about $15\%$ and $25\%$ missing values respectively.

We fit the generated datasets by three models: scale mixture of normal outcomes with correct missing mechanisms, normal outcome model with Probit missing mechanism, and the selection-$t$ model. We show the simulation results for $\beta=-0.5$  in Figure \ref{fig:simu5.2},  and those for $\beta=-1$ are similar and omitted. To save space, we present only the results for the coefficients of $X_1$ and $Y$.
From Figure \ref{fig:simu5.2}, when the missing mechanism is Probit (cases 1 and 3), the estimators of the three methods have small biases for outcome model parameter $\gamma$, but the estimators of the normal outcome model and the selection-$t$ model have large biases for missing mechanism parameters $(\alpha,\beta)$. 
This is because the former model cannot accommodate heavy-tailedness of the outcome distribution and the latter  is not a monotone missing mechanism. Using a scale mixture of normal outcome models, biases of both outcome model parameter $\gamma$ and missing mechanism parameters $(\alpha,\beta)$ become smaller as sample size increases (from 500 to 1500), but they  become even larger using the other two models.
When the missing mechanism is a latent Probit model (cases 2 and 4),
the selection-$t$ models are the true specifications for case 4.
Thus they have smallest biases for $\gamma,\alpha$ and $\beta$ overall.  
But a scale mixture of normal models also works very well for case 4, and their biases for $(\gamma,\alpha,\beta)$ are close to those of the selection-$t$ model.
However, for case 2, the selection-$t$ models have large biases for missing mechanism parameters $(\alpha,\beta)$, and they do not improve as sample size increases.
Among these estimation methods, the normal outcome model is the worst.
So we recommend a scale mixture of normal models for heavy-tailed outcomes because they enjoy robustness and easy interpretations.

\subsection{General mixtures of normal and location mixtures of $t$ outcomes}\label{sec::general-mixture-simu}
We generate data from two-component general mixtures of normal (mixture in both location and scale) outcome models with both the Probit and latent Probit missing mechanisms;  and two-component location mixture of  $t$  outcome models with the Probit missing mechanisms.
We first generate a covariate $X_1 \thicksim N(1,1)$, let $X=(1, X_1)^T$, and then generate the outcome according to the following models:
\begin{enumerate}
\item Normal mixture outcome with the Probit missing mechanism
\[
Y|x  \thicksim \sum_{k=1}^2 \pi_k N(x^T\gamma_k,\sigma_k^2),\quad P(R=1|x,y)=\Phi\left(x^T\alpha+\beta y\right),
\]
with true parameters  $\pi_1=\pi_2=0.5,\gamma_1=(1,1)^T,\theta=(1,-1)^T,\sigma_1^2=1,\sigma_2^2=4$ and $\alpha=(1,1)^T$.

\item Normal mixture outcome with the latent Probit missing mechanism
\[
Y|x  \thicksim \sum_{k=1}^2 \pi_k N(x^T\gamma_k,\sigma_k^2),\quad  P(R=1|x,y,G=k)=\Phi\left\{\frac{x^T\alpha+\beta (y-x^T\gamma_k)}{\sigma_k}\right\},
\]
with true parameters $\pi_1=\pi_2=0.5,\gamma_1=(2,1)^T,\theta=(-1.5,-2)^T,\sigma_1^2=1,\sigma_2^2=4$ and $\alpha=(1.5,-2)^T$.

\item  Location mixtures of $t$ outcome with the Probit missing mechanism
\[
Y|x, \sigma \thicksim\sum_{k=1}^2 \pi_k N(x^T\gamma_k,\sigma^2),\quad  \sigma^2\thicksim \frac{\omega^2\nu}{\chi^2_\nu},\quad P(R=1|x,y)=\Phi\left(x^T\alpha+\beta y\right),
\]
with true parameters  $\nu=5$, $\pi_1=\pi_2=0.5,\gamma_1=(1,3)^T,\theta=(1,-3)^T,\omega^2=1$ and $\alpha=(1,1)^T$.

\end{enumerate}
We choose $\beta=-0.5$ and $-1$ for missing mechanisms. Under these settings, the missing data proportions are between $20\%$ and $50\%$. Note that these three simulations are different from those in the previous subsection because the outcome models here allow for location mixtures, which usually lead to multimodality and have more parameters.

We fit the datasets by a two-component general mixture of normal outcome models with the Probit missing mechanism for cases 1 and 3, and a latent Probit missing mechanism for case 2. We also fit the datasets by the normal outcome with the Probit missing mechanism. Because the normal outcome model returns only one set of location parameters, we focus only on its estimators of missing mechanisms.

Figure \ref{fig:simu5.3} shows the simulation results for $\beta=-0.5$, and those for $\beta=-1$ are similar and omitted. To save space, we present only the results for the coefficients of $X_1$ and $Y$.
Using normal mixture models for estimation, the biases for parameters $(\gamma_1,\theta,\alpha,\beta)$ are small, and they become smaller as sample size increases. But the estimators by normal outcome model are very biased.
Therefore we recommend using a general mixture of normal models for outcomes with multimodality.

\section{\bf Application}

We apply our models to the ambulatory expenditure data previously analyzed by \cite{cameron2009microeconometrics}. 
The dataset contains the log of ambulatory expenditure (\texttt{ambexp}) and covariates $X=(1,\texttt{age, female, educ, blhisp, totchr, ins, income})$, including age, gender, education status, ethnicity, number of chronic diseases, insurance status, and income.
The dataset contains $3328$ observations and there are $526$ missing values of \texttt{ambexp}. More details about the data can be found in Chapter 16 of \cite{cameron2009microeconometrics}. The authors applied the Heckman Selection Model to the data, and they found no significant selection effect.

However,  we suspect that the samples do not exactly follow the normal model.
We apply the general normal mixture model to the data and compare the results with those of the Heckman Selection Model and the selection-$t$ model. 
We present the results in Table \ref{mus}. 
We start with two components and observe that the standard deviations of the two components, $\sigma_1$ and $\sigma_2$, are very different. The estimate of  the ratio $\sigma_1/\sigma_2$ is $0.610$ with $95\%$ confidence interval $(0.500, 0.716)$ excluding one. It provides strong empirical evidence of the existence of two latent normal components rather than a single one. We continue to fit a general normal mixture model with three components, and find that one component has estimated proportion $0.041$. We not only find that the proportion of the third component is small, but also find that the regression coefficients of the third component are similar to one component. Therefore, we choose a general normal mixture model with two components, and omit analysis with more components.

The general normal mixture model with the Probit missing mechanism yields interesting practical interpretations.
In our analysis, we first fit a general normal mixture model with different coefficients on all the covariates for the two components, and find that all regression coefficients are the same except for \texttt{ins}. Therefore, in the last two columns of Table \ref{mus}, we choose to present the results under a more parsimonious model allowing for only different regression coefficients of \texttt{ins}.
The covariate \texttt{ins} has a positive although insignificant effect  on $\log$(\texttt{ambexp}) in a large group of the people ($88\%$) with an estimate  $0.009$ and $95\%$ confidence interval $(-0.095,  0.174)$. 
However,   \texttt{ins} has  a significant negative effect in a small group ($12\%$) with an estimate $-0.595$ and $95\%$ confidence interval $(-1.705, -0.152)$.
The results provide further evidence of two different components. The Heckman Selection Model or the selection-$t$ model cannot detect such latent heterogeneity, because they are not able to accommodate location mixture or multimodality. As a result, these two models mix the two groups, and consequently report no significant impact of \texttt{ins} on $\log$(\texttt{ambexp}). 
The two components in the normal mixture model may reflect different health statuses. For healthy people who rarely needed health care, the insurance had an insignificant effect on ambulatory expenditure; for sick people who needed more health care, the insurance was beneficial for reducing ambulatory expenditure. The two components might also reflect different types of health insurance. 
Sick people tended to buy insurance that had larger coverage proportions, and consequently the insurance helped reduce more ambulatory expenditure.

Previous analyses found evidence of heavy-tailedness in the outcome distribution. However, the upper confidence limit of $\nu$ is the selection-$t$ model is near $20$ \citep{marchenko2012heckman, ding2014bayesian}, and such heavy-tailedness seems to be captured also by the normal mixture model as illustrated in the simulation studies in Sections \ref{sec::scale-mixture-simu} and \ref{sec::general-mixture-simu}. Furthermore, the Heckman Selection Model finds insignificant selection effect, but both the selection-$t$ and the normal mixture model with the Probit missing mechanism find significant selection effect.


\begin{table}[ht]
\center
{\small 
\caption{The ambulatory expenditure study example. } \label{mus}
\begin{tabular}{lrlrlrl}
\toprule
&\multicolumn{2}{c}{Selection}&\multicolumn{2}{c}{Selection-$t$}& \multicolumn{2}{c}{General normal mixture}\\
\midrule
\multicolumn{2}{c}{Outcome Model}	&                                                                            \\
age    &  0.212 & (0.167, 0.257)    &0.207   & (0.163,  0.251)  & 0.209&(0.165,  0.253)\\
female &  0.348 & (0.230, 0.466)    &0.307   & (0.196,  0.417)  & 0.327&(0.213,  0.431)\\
educ   &  0.019 & (-0.002, 0.039)   &0.017   & (-0.003,  0.037) & 0.020&(0.000,  0.040)\\
blhisp &  -0.219& (-0.336, -0.102)  &-0.193  & (-0.306, -0.080)  &-0.211&(-0.326, -0.094)\\
totchr &  0.540 & (0.463, 0.617)    &0.513   & (0.443,  0.583)  & 0.529&(0.460,  0.588)\\
ins    &  -0.030& (-0.130, 0.070)   &-0.053  & (-0.151,  0.046)  & 0.009&(-0.095,  0.174)\\
       &        &                   &        &               &-0.595&(-1.705, -0.152)\\
$\sigma$& 1.271 &(1.236, 1.308)      &1.195& (1.146, 1.246)    & 1.159&(0.958,  1.227)\\
       &       &                   &        &                   & 1.900&(1.423,  2.383)\\
\midrule
\multicolumn{2}{c}{Selection Model}   &                                                                          \\
income &  0.003 &(0.000, 0.005)  &0.003  &(0.000,  0.006)        & 0.003&(0.000,  0.006)\\
age    &  0.088 &(0.034, 0.142)  &0.099  &(0.040,  0.157)       & 0.127&(0.060,  0.208)\\
female &  0.663 &(0.543, 0.782)  &0.725  &(0.591,  0.859)       & 0.734&(0.609,  0.911)\\
educ   &  0.062 &(0.038, 0.086)  &0.065  &(0.040,  0.090)       & 0.066&(0.045,  0.091)\\
blhisp &  -0.364&(-0.485, -0.243)&-0.394 &(-0.524, -0.263)     &-0.407&(-0.548, -0.291)\\
totchr &  0.797 &(0.658, 0.936)  &0.890  &(0.719,  1.061)    & 0.906&(0.750,  1.137)\\
ins    &  0.170 &(0.047, 0.293)  &0.180  &(0.048,  0.313)      & 0.166&(0.047,  0.289)\\
$\rho$&-0.131 &(-0.401; 0.161)&-0.322&(-0.526, -0.083)\\
$\beta_Y$&&&&& -0.174&(-0.414, -0.025)\\
\midrule
$ \pi_1$&& 1 && 1                                                  &  0.880&(0.401,  0.971)\\
$\log L$&\multicolumn{2}{c}{-5836.219}&\multicolumn{2}{c}{-5822.076}& \multicolumn{2}{c}{-5820.457}\\
AIC&\multicolumn{2}{c}{11706.44}&\multicolumn{2}{c}{11676.15} &\multicolumn{2}{c}{11680.91}\\
\bottomrule
\end{tabular}
\begin{flushleft}
Note: Point estimates and $95\%$ confidence intervals of the Heckman Selection Model (columns 2 and 3), the selection-$t$ model (columns 4 and 5), and the general mixtures of normal model (columns 6 and 7).
The $\rho$ parameter is only for the Heckman selection model and selection-$t$ model, and the $\beta_Y$ parameter is only for the general mixture of normal model. The $\pi_1$ parameter is the proportion of the first latent component, $\log L$ is the log likelihood of the observed data, and AIC is the Akaike Information Criterion.
\end{flushleft}
}
\end{table}

\section{\bf Discussion}

Dealing with missing data is crucial for many applied problems, which are often challenging if the missing mechanisms are nonignorable. Even if we have a fully parametric model on the missing mechanism, nonparametric outcome models subject to nonignorable missing data are not identifiable.  In this paper, we have demonstrated the identifiability of normal and normal mixture models with nonignorable missing data. The identifiability results do not require an instrumental variable for the missing mechanism, or the exclusion restriction assumption in the Heckman Selection Model \citep{wooldridge2010econometric}, i.e., there exists a covariate that is only in the selection equation but not in the outcome equation.
Although the selection-$t$ model could accommodate heavy-tailedness of the data, it could not model outcome with multimodality or latent groups. Our $t$ mixture model with a nonignorable missing mechanism filled
in the gap by allowing for modeling heavy-tailedness and multimodality simultaneously.

There are a few questions beyond the current scope of this paper. First, determining the number of components in the mixture models \citep{lo2001testing,chen2012inference} is an important problem in practice. 
Second, although the $t$ mixture model with the Robit missing mechanism, as discussed in Theorem \ref{thm:TF}, is useful for modeling heavy-tailed data, its maximum likelihood estimates involve numerical integrations. We are going to develop an efficient Bayesian inferential procedure for it.

\section*{\bf  Supplementary Materials}

Supplementary Materials contain the proofs of the theorems and corollaries.

\bibliographystyle{apa}
\bibliography{mixNormal}

\begin{figure}[hb]
\centering
\subfloat[Generated from the Probit missing mechanism]{
\includegraphics[width=.5\textwidth,height=0.3\textwidth]{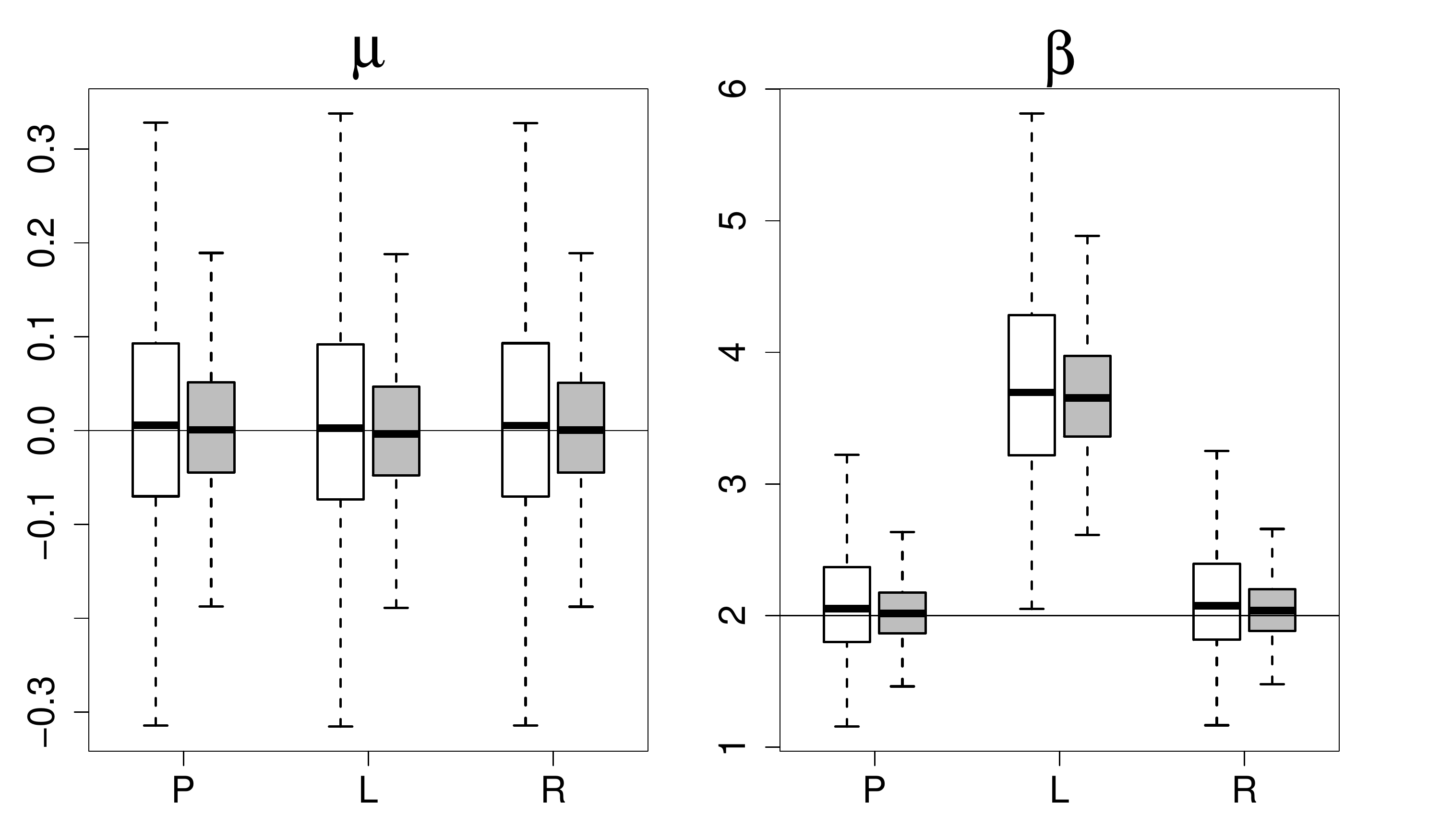}}\nolinebreak[4]
\subfloat[Generated from the Logistic missing mechanism]{
\includegraphics[width=.5\textwidth,height=0.3\textwidth]{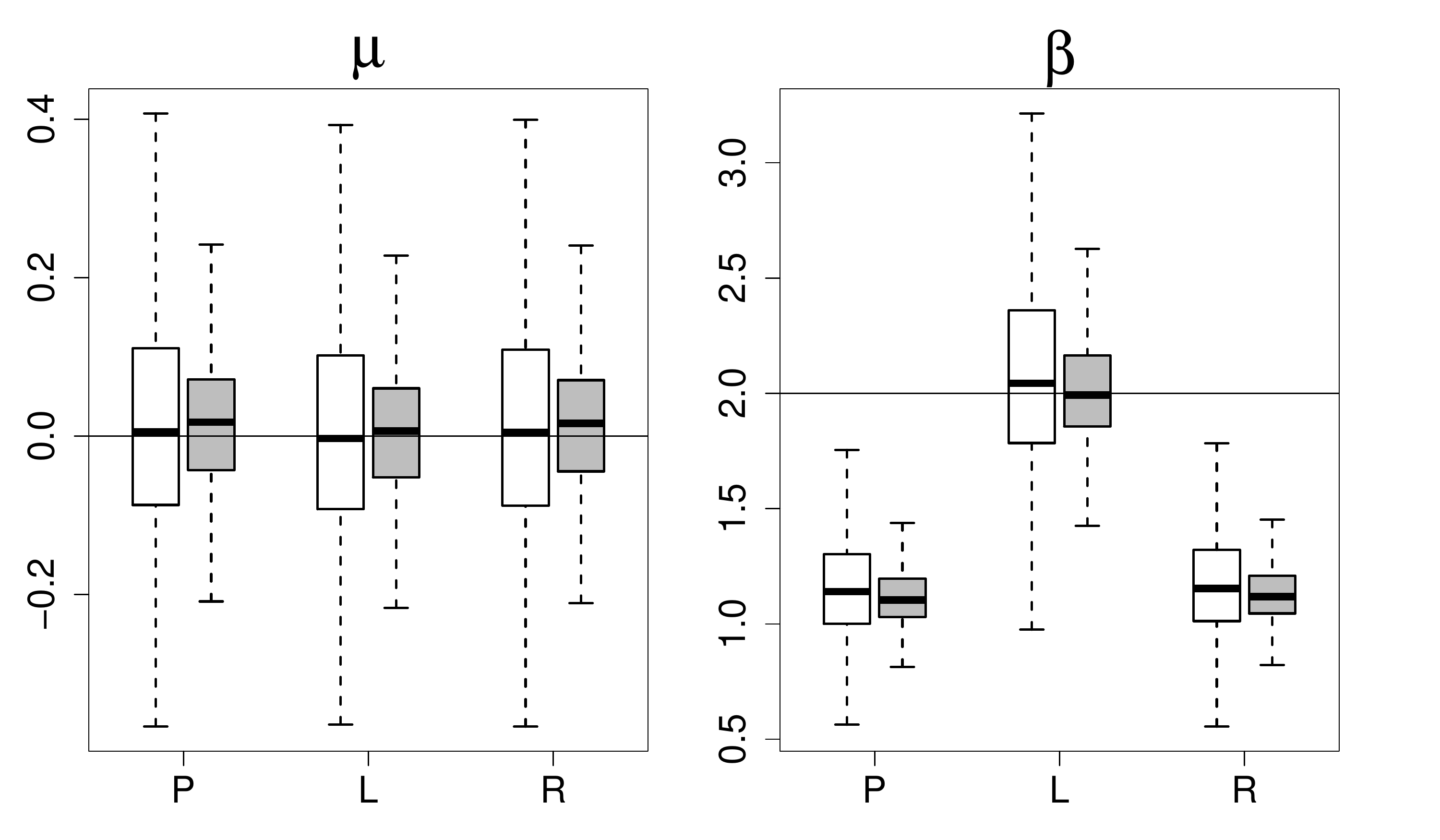}}\\
\subfloat[Generated from the Robit missing mechanism ($\nu=2$)]{
\includegraphics[width=.5\textwidth,height=0.3\textwidth]{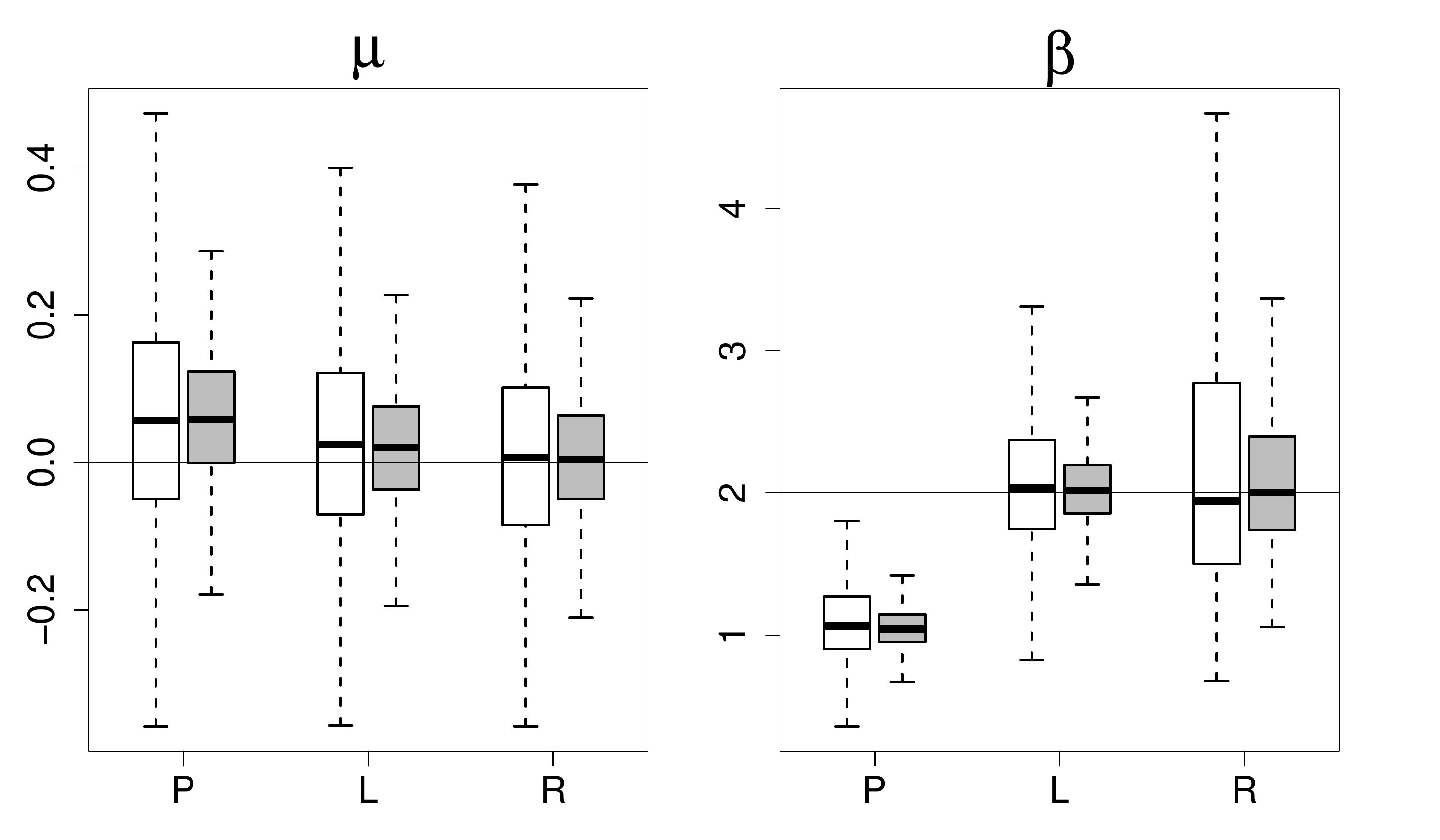}}\nolinebreak[4]
\subfloat[Generated from the Robit missing mechanism ($\nu=16$)]{
\includegraphics[width=.5\textwidth,height=0.3\textwidth]{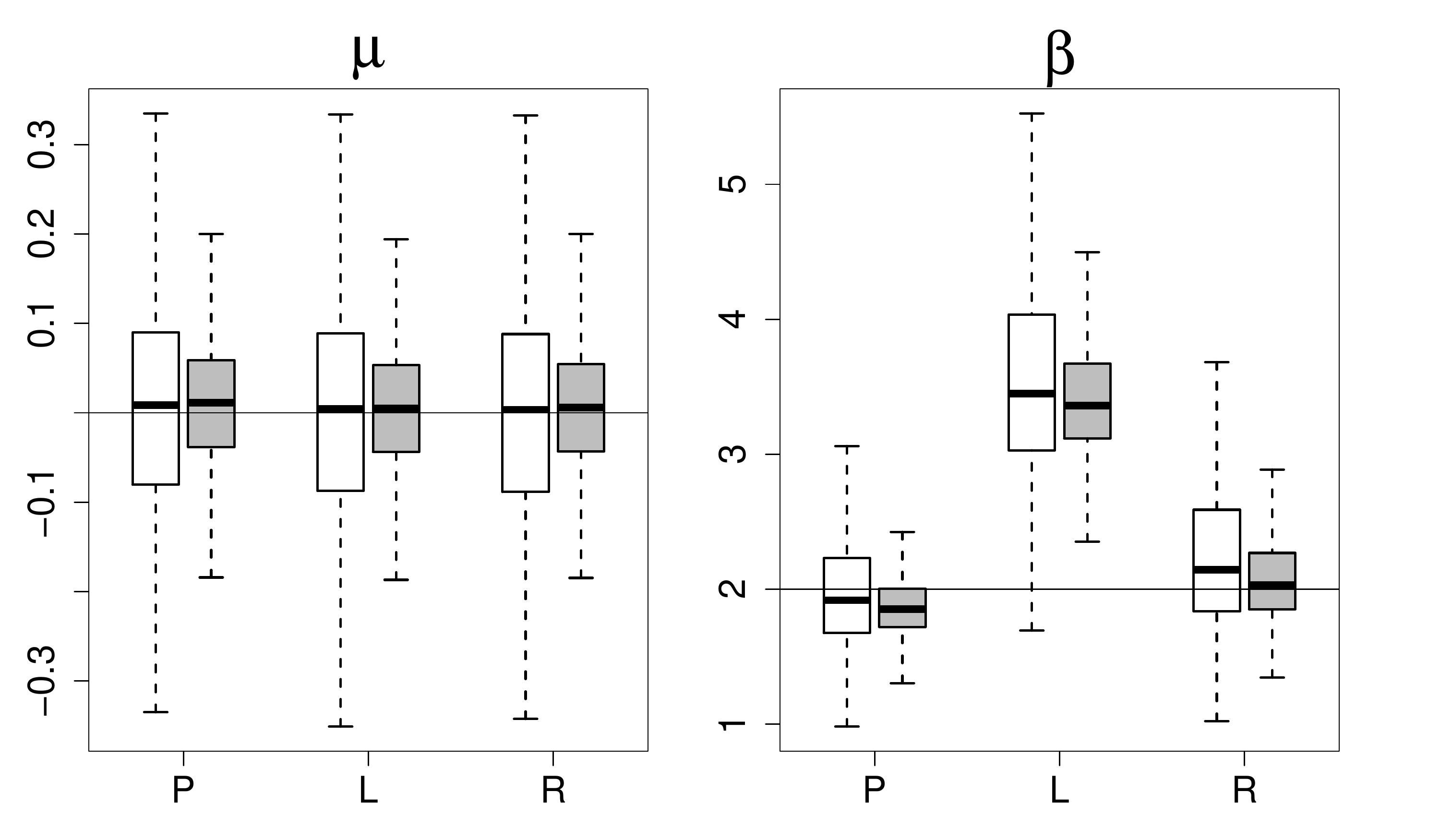}}\\
\caption{Estimates when $\beta=2$. Data are generated from normal outcome models with different missing mechanisms, and analyzed under normal outcome models with different missing mechanisms: Probit (P), Logistic (L) and Robit (R) with unknown degrees of freedom. In each boxplot, white boxes are for sample size 500 and gray boxes for 1500. The horizontal lines illustrate the true values of the parameters. } \label{fig:simu5.1}
\end{figure}

\begin{figure}
\centering
\subfloat[Generated from scale mixture of normal outcome with the Probit missing mechanism]{
\includegraphics[width=\textwidth,height=0.2\textwidth]{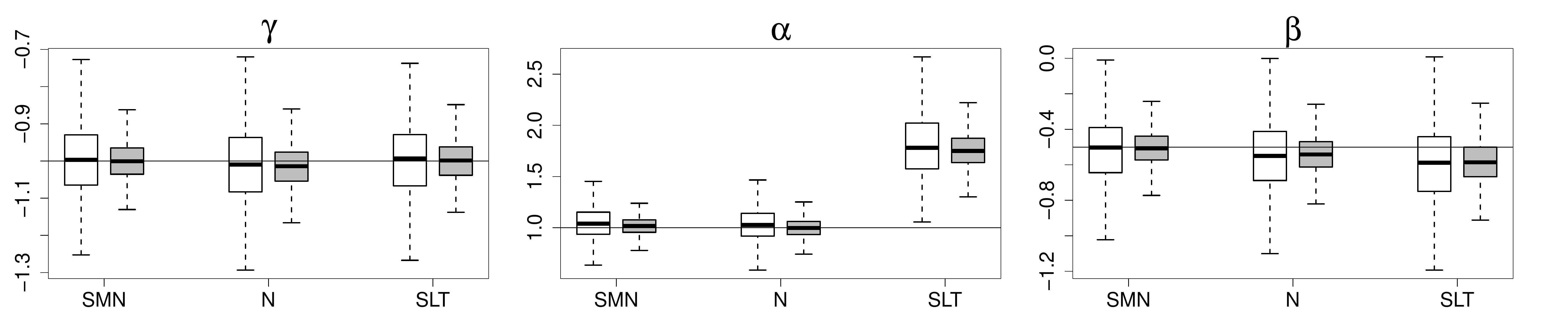}}\\
\subfloat[Generated from scale mixture of normal outcome with the latent Probit missing mechanism]{
\includegraphics[width=\textwidth,height=0.2\textwidth]{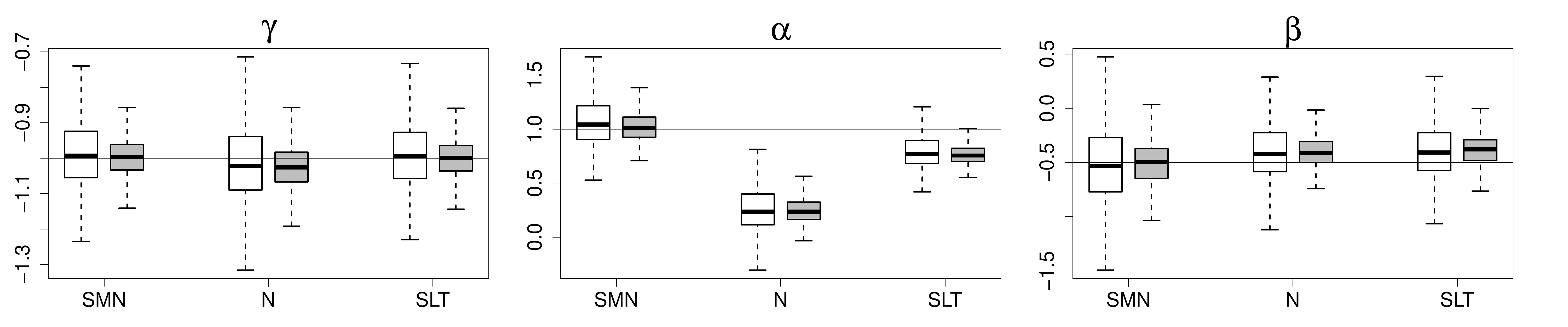}}\\
\subfloat[Generated from $t_{5}$ outcome with the Probit missing mechanism]{
\includegraphics[width=\textwidth,height=0.2\textwidth]{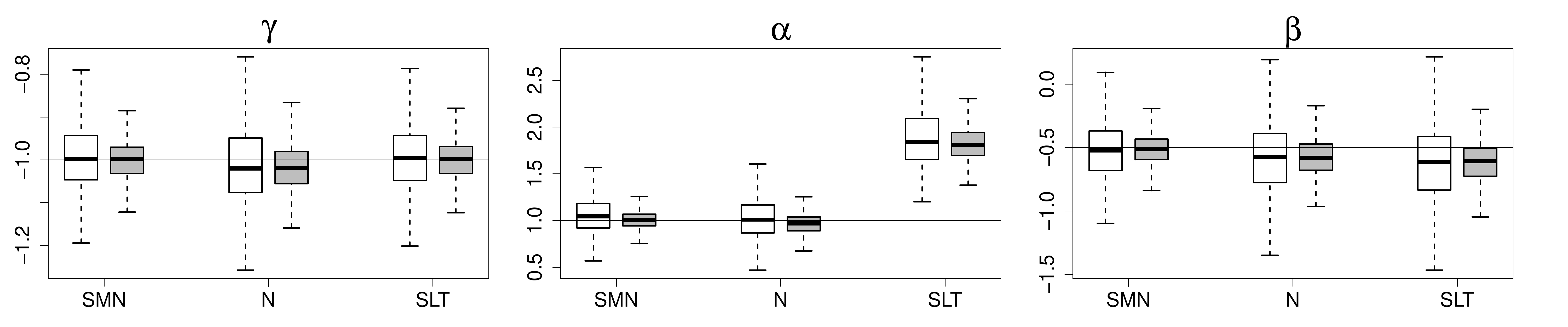}}\\
\subfloat[Generated from $t_{5}$ outcome with the latent Probit missing mechanism]{
\includegraphics[width=\textwidth,height=0.2\textwidth]{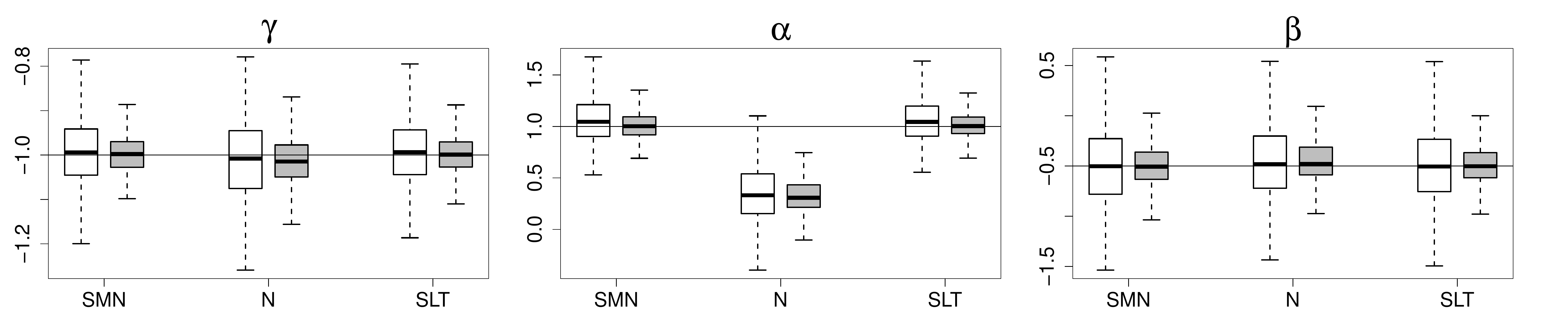}}
\caption{Estimates when $\beta=-0.5$. Data are generated from different outcome models, and analyzed under different outcome models with correct missing mechanisms: two-component scale mixture of normal outcome (SMN), normal outcome (N), and the selection-$t$ model (SLT). The estimators of SLT are re-parametrized to be equivalent to the scale mixture of normals with the latent Probit missing mechanism as shown in (\ref{eq::re-para-selection-t}).
In each boxplot, white boxes are for sample size 500 and gray boxes for 1500. The horizontal lines illustrate the true values of the parameters. } \label{fig:simu5.2}
\end{figure}

\begin{figure}
\centering
\subfloat[Generated from general mixture of normal outcome with the Probit missing mechanism]{
\includegraphics[width=\textwidth,height=0.3\textwidth]{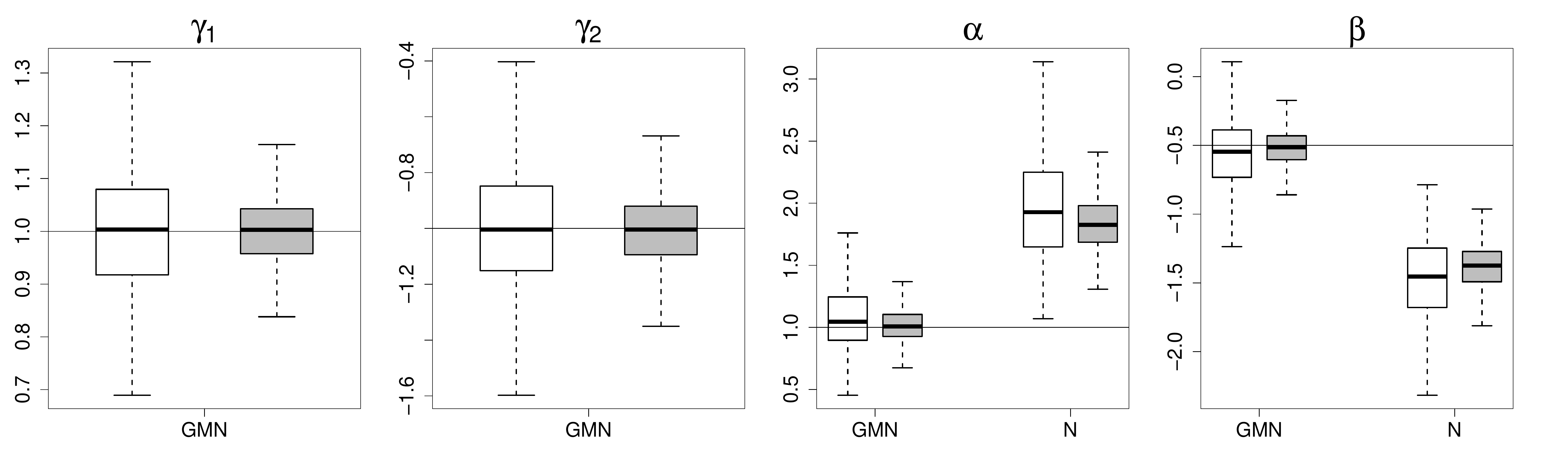}}\\
\subfloat[Generated from general mixture of normal outcome with the latent Probit missing mechanism]{
\includegraphics[width=\textwidth,height=0.3\textwidth]{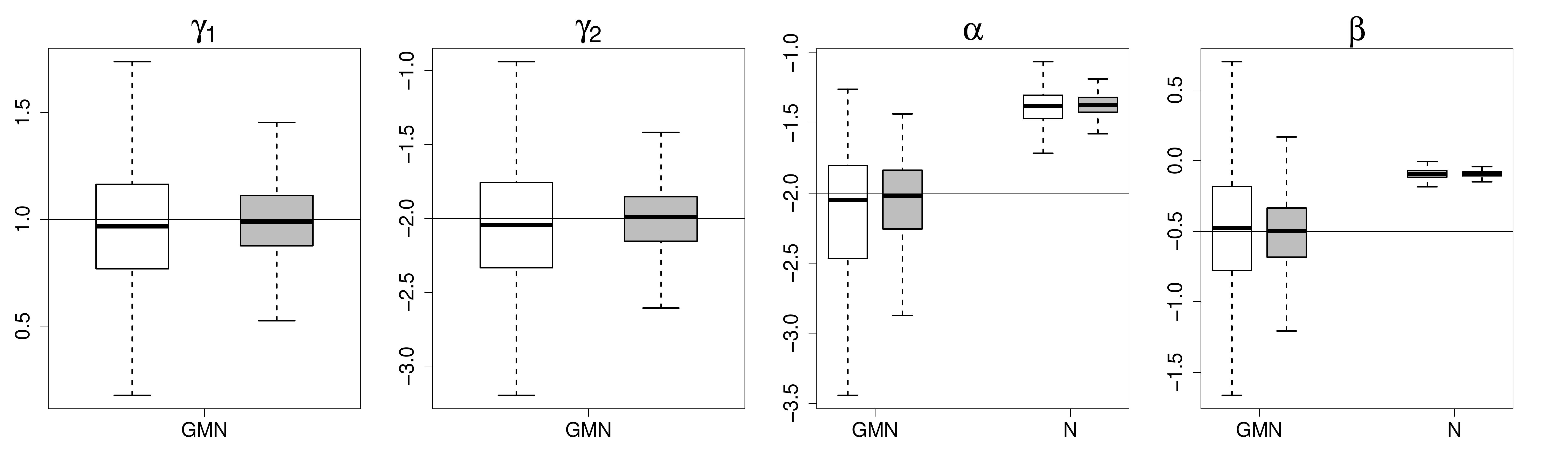}}\\
\subfloat[Generated from $t_{5}$ mixture outcome with the Probit missing mechanism]{
\includegraphics[width=\textwidth,height=0.3\textwidth]{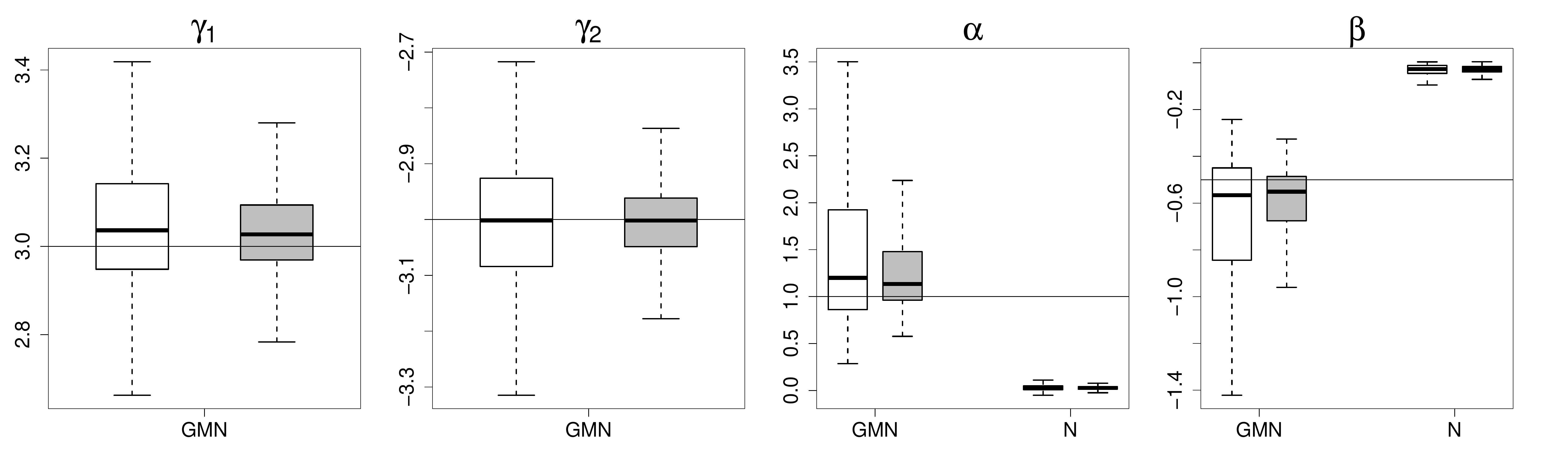}}
\caption{Estimates when $\beta=-0.5$. Data are generated from different outcome models, and analyzed under different outcome models with correct missing mechanisms: two-component general mixture of normal outcome (GMN), and normal outcome (N).
In each boxplot, white boxes are for sample size 500 and gray boxes for 1500. The horizontal lines illustrate the true values of the parameters. } \label{fig:simu5.3}
\end{figure}

\clearpage 
\appendix
\setcounter{page}{1}
\begin{center}
\title{\Large \bf Supplementary Materials for ``Identifiability of Normal and Normal Mixture Models With
Nonignorable Missing Data''}
\bigskip 

\date{}
\maketitle
\end{center}

\setcounter{equation}{0}
\setcounter{section}{0}
\setcounter{figure}{0}
\setcounter{example}{0}
\setcounter{proposition}{0}

\renewcommand {\theproposition} {A.\arabic{proposition}}
\renewcommand {\theexample} {A.\arabic{example}}
\renewcommand {\thefigure} {A.\arabic{figure}}
\renewcommand {\thetable} {A.\arabic{table}}
\renewcommand {\theequation} {A.\arabic{equation}}
\renewcommand {\thelemma} {A.\arabic{lemma}}
\makeatletter   
 \renewcommand{\@seccntformat}[1]{APPENDIX~{\csname the#1\endcsname}.\hspace*{1em}}
 \makeatother

To prove the theorems and corollaries in this paper, we need several lemmas. 
\begin{lemma}\label{lem:1}
If $\sigma\neq\sigma'$, then for any $(\alpha,\beta,\alpha',\beta')$ and  distribution functions $F_1(\cdot)$ and $F_2(\cdot)$, at least one of the following two statements holds:
\[\lim\limits_{y\rightarrow+\infty}\frac{\phi\left(\frac{y-\mu}{\sigma}\right)}{\phi\left(\frac{y-\mu'}{\sigma'}\right)}\cdot\frac{F_1(\alpha+\beta y)}{F_2(\alpha'+\beta' y)}=+\infty\text{ or } 0\text{ for any $\mu,\mu'$;}\]
\[\lim\limits_{y\rightarrow-\infty}\frac{\phi\left(\frac{y-\mu}{\sigma}\right)}{\phi\left(\frac{y-\mu'}{\sigma'}\right)}\cdot\frac{F_1(\alpha+\beta y)}{F_2(\alpha'+\beta' y)}=+\infty\text{ or } 0\text{ for any $\mu,\mu'$.}\]
\end{lemma}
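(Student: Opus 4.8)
The plan is to reduce everything to a comparison between the super-exponential growth of the ratio of normal densities and the \emph{a priori} uncontrolled distribution-function ratio, and then to exploit a sign asymmetry to pick the ``good'' end of the real line. First I would write the ratio of normal densities explicitly: since $\phi(z)=\frac{1}{\sqrt{2\pi}}e^{-z^2/2}$,
\[
\frac{\phi\left(\frac{y-\mu}{\sigma}\right)}{\phi\left(\frac{y-\mu'}{\sigma'}\right)}=\exp\left(c y^2+b y+a\right),\qquad c=\frac{1}{2}\left(\frac{1}{\sigma'^2}-\frac{1}{\sigma^2}\right),
\]
where $b,a$ are constants depending on $(\mu,\mu',\sigma,\sigma')$. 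The hypothesis $\sigma\neq\sigma'$ is used exactly here: it forces $c\neq 0$, so the quadratic term dominates the linear term $by$ and the exponent tends to $\operatorname{sign}(c)\cdot\infty$ at \emph{both} ends $y\to\pm\infty$, uniformly over the choice of $\mu,\mu'$.

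Next I would take logarithms of the full expression $G(y)$, and record the two free one-sided bounds available because $F_1,F_2$ are distribution functions (and, in the setting of the paper, valued in $(0,1)$, so no $\log 0$ arises): namely $\log F_1(\alpha+\beta y)\le 0$ and $-\log F_2(\alpha'+\beta' y)\ge 0$. Thus
\[
\log G(y)=c y^2+b y+a+\log F_1(\alpha+\beta y)-\log F_2(\alpha'+\beta' y).
\]
The obstacle is that the two distribution-function terms are otherwise completely uncontrolled: with no assumption on the tails of $F_1,F_2$, their ratio can blow up or vanish at an arbitrary rate, so a crude two-sided bound at a fixed end is hopeless. The resolution is to choose the end at which the remaining distribution-function term pushes $\log G$ in the \emph{same} direction as the quadratic term, so that only the free one-sided bound is needed.

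Concretely, I would split on the sign of $c$. If $c>0$, I would select the end at which $F_1(\alpha+\beta y)$ does \emph{not} approach its left tail, i.e. $y\to+\infty$ when $\beta\ge 0$ and $y\to-\infty$ when $\beta<0$. At that end $\log F_1$ is bounded below, while $-\log F_2\ge 0$, so $\log G\ge c y^2+b y+a+(\text{const})\to+\infty$ and the limit is $+\infty$. If $c<0$, I would instead select the end at which $F_2(\alpha'+\beta' y)$ stays bounded away from $0$, i.e. $y\to+\infty$ when $\beta'\ge 0$ and $y\to-\infty$ when $\beta'<0$. There $-\log F_2$ is bounded above, while $\log F_1\le 0$, so $\log G\le c y^2+b y+a+(\text{const})\to-\infty$ and the limit is $0$.

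In every case the selected end depends only on the fixed signs of $c,\beta,\beta'$ and never on $\mu,\mu'$, so the chosen alternative (the $+\infty$ statement or the $-\infty$ statement) holds for all $\mu,\mu'$ simultaneously, which is precisely the ``for any $\mu,\mu'$'' clause. I expect the bulk of the work to be the routine bookkeeping of which end is selected in each sign combination, together with checking the degenerate $\beta=0$ and $\beta'=0$ sub-cases (where the relevant term is a harmless constant and, if $F_1$ vanishes there, $G\equiv 0$ trivially). The one genuinely important idea is the observation that $c$ has the \emph{same} sign at both ends, which is what lets a single one-sided bound on $F_1$ or on $F_2$ close the argument; no tail assumption on $F_1,F_2$ is required.
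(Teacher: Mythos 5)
Your proposal is correct and follows essentially the same route as the paper's proof: expand the Gaussian ratio so the $\sigma\neq\sigma'$ hypothesis yields a nonzero quadratic coefficient that dominates at both ends, then use a sign-based case analysis to select the end of the real line at which the distribution-function ratio is one-sidedly bounded in the helpful direction (the paper organizes the cases by the signs of $\beta,\beta'$, you by the sign of the quadratic coefficient first, but the mechanism is identical). Your explicit remark that the selected end depends only on the signs of $c,\beta,\beta'$ and not on $\mu,\mu'$ is a nice clarification of why the ``for any $\mu,\mu'$'' clause holds.
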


\begin{proof}
We first simplify the expression as
\begin{gather}
\frac{\phi\left(\frac{y-\mu}{\sigma}\right)}{\phi\left(\frac{y-\mu'}{\sigma'}\right)}\cdot\frac{F_1(\alpha+\beta y)}{F_2(\alpha'+\beta' y)}=\exp\left\{\frac{(\sigma^2-\sigma'^2)y^2}{2\sigma^2\sigma'^2}+\frac{(\sigma'^2\mu-\sigma^2\mu')y}{\sigma^2\sigma'^2}+\frac{\sigma^2\mu'^2-\sigma'^2\mu^2}{2\sigma^2\sigma'^2}\right\}\cdot\frac{F_1(\alpha+\beta y)}{F_2(\alpha'+\beta' y)},\label{prf:lem1}
\end{gather}
and then discuss its limit for the following four cases.

\begin{enumerate}
\item[(a)] $\beta\geq0,\beta'\geq0$. Because $\lim_{y\rightarrow+\infty}F_1(\alpha+\beta y)/F_2(\alpha'+\beta' y)$ is finite and positive, as $y\rightarrow+\infty$ the limit of expression (\ref{prf:lem1}) is $+\infty$ if $\sigma>\sigma'$ and $0$ if $\sigma<\sigma'$, for any $\mu,\mu'$.

\item[(b)] $\beta<0,\beta'<0$. Because $\lim_{y\rightarrow-\infty}F_1(\alpha+\beta y)/F_2(\alpha'+\beta' y)=1$, as $y\rightarrow-\infty$ the limit of expression (\ref{prf:lem1}) is $+\infty$ if $\sigma>\sigma'$ and $0$ if $\sigma<\sigma'$, for any $\mu,\mu'$.

\item[(c)] $\beta\geq0,\beta'<0$. Because $\lim_{y\rightarrow+\infty}F_1(\alpha+\beta y) $ is finite and positive and $\lim_{y\rightarrow-\infty}F_2(\alpha'+\beta' y)=1$, as $y\rightarrow+\infty$ the limit of expression (\ref{prf:lem1}) is $+\infty$ for $\sigma>\sigma'$, and as $y\rightarrow-\infty$ its limit is $0$ for $\sigma<\sigma'$, for any $\mu,\mu'$.

\item[(d)] $\beta<0,\beta'\geq0$. Because $\lim_{y\rightarrow-\infty}F_1(\alpha+\beta y)=1$ and $\lim_{y\rightarrow+\infty}F_2(\alpha'+\beta' y)$ is finite and positive, as $y\rightarrow-\infty$ the limit of expression (\ref{prf:lem1}) is $+\infty$ for $\sigma>\sigma'$, and as $y\rightarrow+\infty$ its limit is $0$ for $\sigma < \sigma'$, for any $\mu,\mu'$.
\end{enumerate}
Therefore, at least one of the two statements holds.
\end{proof}

\begin{lemma}\label{lem:2}
If $\beta\geq0$, $\beta'\geq0$, then for any $\mu\neq\mu'$ and any distribution functions $F_1(\cdot)$ and $F_2(\cdot)$, we have
\[\lim_{y\rightarrow+\infty}\frac{\phi\left(\frac{y-\mu}{\sigma}\right)}{\phi\left(\frac{y-\mu'}{\sigma}\right)}\cdot\frac{F_1(\alpha+\beta y)}{F_2(\alpha'+\beta' y)}=0\text{ or }+\infty.\]
If $\beta < 0, \beta' < 0$, we have the same result as $y\rightarrow -\infty$.

\end{lemma}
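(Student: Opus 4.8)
The plan is to factor the expression into the Gaussian density ratio and the ratio of response probabilities, then to show that the density ratio is a pure exponential in $y$ whose direction of growth is pinned down by $\mu\neq\mu'$, while the probability ratio converges to a finite positive constant and therefore cannot affect whether the product diverges or vanishes. Since this lemma fixes a single sign configuration and a single limiting direction, the conclusion will be an unambiguous dichotomy, in contrast to the two-pronged statement of Lemma \ref{lem:1}.

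First I would simplify the Gaussian ratio. Because the two densities share the common scale $\sigma$, the quadratic-in-$y$ terms cancel and
\[
\frac{\phi\!\left(\frac{y-\mu}{\sigma}\right)}{\phi\!\left(\frac{y-\mu'}{\sigma}\right)}=\exp\!\left\{\frac{(\mu-\mu')\,[\,2y-(\mu+\mu')\,]}{2\sigma^2}\right\},
\]
so the exponent is linear in $y$ with slope $(\mu-\mu')/\sigma^2\neq0$. Hence, as $y\to+\infty$, this ratio tends to $+\infty$ when $\mu>\mu'$ and to $0$ when $\mu<\mu'$, and it does so at an exponential rate.

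Next I would control the factor $F_1(\alpha+\beta y)/F_2(\alpha'+\beta' y)$ as $y\to+\infty$. Since $\beta\geq0$ and $\beta'\geq0$, each argument either diverges to $+\infty$ (when its coefficient is positive, so $F\to1$) or stays at a fixed finite value (when its coefficient is zero, so $F$ equals a constant in $(0,1)$, using that $F$ has support on all of $\mathbb{R}$ and is therefore strictly positive at any finite point). Checking the four sub-cases according to whether $\beta$ and $\beta'$ are positive or zero, the ratio always converges to a finite, strictly positive limit.

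Combining the two factors, the finite positive limit of the $F$-ratio is negligible against the exponential behavior of the Gaussian ratio, so the product tends to $+\infty$ if $\mu>\mu'$ and to $0$ if $\mu<\mu'$. For the case $\beta<0,\beta'<0$ with $y\to-\infty$, the arguments $\alpha+\beta y$ and $\alpha'+\beta' y$ again diverge to $+\infty$, so both $F_1$ and $F_2$ tend to $1$ and their ratio to $1$; meanwhile the linear exponent $(\mu-\mu')y/\sigma^2$ now drives the Gaussian ratio to $0$ (if $\mu>\mu'$) or to $+\infty$ (if $\mu<\mu'$), yielding the same dichotomy. I do not expect a substantive obstacle here: unlike Lemma \ref{lem:1}, the shared $\sigma$ eliminates the $y^2$ term and makes the limit definite in each direction, so the only care required is the bookkeeping over the zero-versus-positive sub-cases of $(\beta,\beta')$ and the invocation of full support of $F$ to keep the denominator bounded away from zero.
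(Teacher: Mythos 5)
Your proposal is correct and follows essentially the same route as the paper: cancel the quadratic terms in the Gaussian ratio to leave a linear exponent with slope $(\mu-\mu')/\sigma^2$, observe that the ratio $F_1(\alpha+\beta y)/F_2(\alpha'+\beta' y)$ has a finite positive limit in the relevant direction, and conclude the dichotomy; the only cosmetic difference is that you compute the $\beta<0,\beta'<0$ case directly at $y\to-\infty$ while the paper reduces it to the first case by the substitution $\tilde y=-y$, $\tilde\mu=-\mu$, $\tilde\beta=-\beta$.
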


\begin{proof}
We simplify the expression as
\begin{gather}
\frac{\phi\left(\frac{y-\mu}{\sigma}\right)}{\phi\left(\frac{y-\mu'}{\sigma}\right)}\cdot\frac{F_1(\alpha+\beta y)}{F_2(\alpha'+\beta' y)}=\exp\left\{\frac{(\mu-\mu')y}{\sigma^2}+\frac{\mu'^2-\mu^2}{2\sigma^2}\right\}\cdot\frac{F_1(\alpha+\beta y)}{F_2(\alpha'+\beta' y)}.\label{prf:lem2}
\end{gather}
If $\beta\geq0$ and $\beta'\geq0$, $\lim_{y\rightarrow+\infty}F_1(\alpha+\beta y)/F_2(\alpha'+\beta' y)$ is finite and positive. As $y \rightarrow +\infty$, if $\mu>\mu'$, the limit of expression (\ref{prf:lem2}) is $+\infty$; if $\mu<\mu'$, its limit is $0$. 
If $\beta<0$ and $\beta' < 0$, we let  $\tilde \beta=-\beta$, $\tilde \beta'=-\beta'$, $\tilde y=-y$, $\tilde\mu=-\mu$ and  $\tilde\mu'=-\mu'$. We have the same result as $y\rightarrow -\infty$, or equivalently $\tilde y \rightarrow +\infty$, by the same argument.
\end{proof}

\begin{lemma}\label{lem:3}
If $\lim_{z\rightarrow-\infty}F_i(z)/e^{\delta z}=0\text{ or }+\infty$ for any $\delta>0$ ($i=1,2$), then for any $(\alpha,\beta,\alpha',\beta',\sigma)$ at least one of the following statements holds:
\[\lim\limits_{y\rightarrow+\infty}\frac{\phi\left(\frac{y-\mu}{\sigma}\right)}{\phi\left(\frac{y-\mu'}{\sigma}\right)}\cdot\frac{F_1(\alpha+\beta y)}{F_2(\alpha'+\beta' y)}=0\text{ or }+\infty \text{ for any $\mu\neq\mu'$;}\]
\[\lim\limits_{y\rightarrow-\infty}\frac{\phi\left(\frac{y-\mu}{\sigma}\right)}{\phi\left(\frac{y-\mu'}{\sigma}\right)}\cdot\frac{F_1(\alpha+\beta y)}{F_2(\alpha'+\beta' y)}=0\text{ or }+\infty \text{ for any $\mu\neq\mu'$.}\]
\end{lemma}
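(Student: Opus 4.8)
The plan is to reduce to the algebraic simplification already used in Lemma \ref{lem:2}. Since the two Gaussian densities share the common scale $\sigma$, the quadratic-in-$y$ term cancels and the quantity of interest becomes
\[
\exp\left\{\frac{(\mu-\mu')y}{\sigma^2}+\frac{\mu'^2-\mu^2}{2\sigma^2}\right\}\cdot\frac{F_1(\alpha+\beta y)}{F_2(\alpha'+\beta' y)},
\]
an exponential linear in $y$ multiplied by the ratio of the two response probabilities. First I would partition the argument into the four sign cases for $(\beta,\beta')$, exactly as in Lemma \ref{lem:1}.

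When $\beta$ and $\beta'$ have the same sign (the cases $\beta\ge 0,\beta'\ge 0$ and $\beta<0,\beta'<0$), the ratio $F_1(\alpha+\beta y)/F_2(\alpha'+\beta' y)$ tends to a finite positive limit along the relevant tail ($y\to+\infty$ in the first, $y\to-\infty$ in the second). The conclusion then follows immediately from Lemma \ref{lem:2}: the linear exponential dominates and forces the product to $0$ or $+\infty$ for every $\mu\neq\mu'$, so one of the two displayed statements holds.

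The substance of the lemma lies in the mixed-sign cases, say $\beta\ge 0,\beta'<0$. Letting $y\to+\infty$, the numerator $F_1(\alpha+\beta y)$ converges to a positive constant while the denominator $F_2(\alpha'+\beta' y)$ decays to zero because $\alpha'+\beta' y\to-\infty$. If $\mu>\mu'$ the linear exponential also blows up and the product is plainly $+\infty$; the delicate subcase is $\mu<\mu'$, where the decaying exponential $\exp\{(\mu-\mu')y/\sigma^2\}$ competes against the vanishing denominator. Here I would substitute $z=\alpha'+\beta' y$, rewriting the indeterminate factor as $e^{\delta_0 z}/F_2(z)$ with the positive constant $\delta_0=(\mu'-\mu)/(\sigma^2|\beta'|)$; the hypothesis on $F_2$ forces $F_2(z)/e^{\delta_0 z}\to 0$ or $+\infty$, so the product tends to $+\infty$ or $0$. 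The case $\beta<0,\beta'\ge 0$ is symmetric, with the roles of $F_1$ and $F_2$ interchanged and the hypothesis applied to $F_1$ as $y\to+\infty$ in the subcase $\mu>\mu'$.

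I expect the main obstacle to be precisely this competition between the Gaussian-induced linear exponential and the tail of the wrong-signed $F_i$. It is to break the resulting indeterminate form that Condition \ref{condi:A}, the non-exponential left-tail assumption on the $F_i$, is invoked, and this is the only point in the four-case analysis where the same-sign argument of Lemma \ref{lem:2} does not by itself suffice.
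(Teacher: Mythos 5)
Your proposal is correct and follows essentially the same route as the paper's proof: the same reduction to the linear-exponential-times-ratio form, the same four sign cases with the same-sign cases delegated to Lemma \ref{lem:2}, and the same substitution $z=\alpha'+\beta'y$ in the delicate mixed-sign subcase ($\mu<\mu'$ with $\beta\ge 0$, $\beta'<0$) where the non-exponential-tail hypothesis on $F_2$ resolves the indeterminate form, with the opposite mixed-sign case handled symmetrically. No gaps.
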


\begin{proof}
We first simplify the expression as
\begin{gather}
\frac{\phi\left(\frac{y-\mu}{\sigma}\right)}{\phi\left(\frac{y-\mu'}{\sigma}\right)}\cdot\frac{F_1(\alpha+\beta y)}{F_2(\alpha'+\beta' y)}=\exp\left\{\frac{(\mu-\mu')y}{\sigma^2}+\frac{\mu'^2-\mu^2}{2\sigma^2}\right\}\cdot\frac{F_1(\alpha+\beta y)}{F_2(\alpha'+\beta' y)} ,\label{prf:lem3}
\end{gather}
and then discuss its limit for the following four cases.
\begin{enumerate}
\item[(a)] $\beta\geq0$, $\beta'\geq0$. We have proved this case in Lemma \ref{lem:2}.

\item[(b)] $\beta<0$, $\beta'<0$. Letting $\tilde\beta=-\beta$, $\tilde\beta'=-\beta'$, $\tilde y=-y$, $\tilde\mu=-\mu$ and $\tilde \mu'=-\mu'$, we have $\tilde\beta>0$, $\tilde\beta'>0$ and
    \[\frac{\phi\left(\frac{y-\mu}{\sigma}\right)}{\phi\left(\frac{y-\mu'}{\sigma}\right)}\cdot\frac{F_1(\alpha+\beta y)}{F_2(\alpha'+\beta' y)}=\frac{\phi\left(\frac{\tilde y-\tilde \mu}{\sigma}\right)}{\phi\left(\frac{\tilde y-\tilde \mu'}{\sigma}\right)}\cdot\frac{F_1(\alpha+\tilde \beta \tilde y)}{F_2(\alpha'+\tilde \beta' \tilde y)}.\]
    By Lemma \ref{lem:2}, its limit is either $0$ or $+\infty$ as $\tilde y\rightarrow +\infty$, or equivalently $y\rightarrow-\infty$, for any $\mu\neq\mu'$.
    
\item[(c)] $\beta\geq0$, $\beta'<0$. 
First, we discuss the case with $\mu>\mu'$. 
We have $\lim_{y\rightarrow+\infty}F_2(\alpha'+\beta' y)=0$, $\lim_{y\rightarrow+\infty}F_1(\alpha+\beta y)$ is finite and positive, and $\lim_{y\rightarrow+\infty}\exp\left\{(\mu-\mu')y/\sigma^2\right\}=+\infty$. Therefore, the limit of (\ref{prf:lem3})
is $+\infty$. Second, we discuss the case with $\mu < \mu'$. 
As $y\rightarrow+\infty$, $z=\alpha'+\beta' y\rightarrow-\infty$, because $\lim_{z\rightarrow-\infty}F_2(z)/e^{\delta z}=0\text{ or }+\infty$ with $\delta=(\mu-\mu')/(\sigma^2\beta')>0$, the limit of (\ref{prf:lem3}) is $0$ or $+\infty$.

\item[(d)] $\beta<0$, $\beta'\geq0$. Exchange numerator and denominator in (\ref{prf:lem3}), and we can follow (c) to prove this case.
\end{enumerate}
\end{proof}

\begin{lemma}\label{lem:4}
For any distribution function $F(\cdot)$, the limit
\begin{gather*}
\lim\limits_{y\rightarrow+\infty}\frac{\phi\left(\frac{y-\mu}{\sigma}\right)}{\phi\left(\frac{y-\mu'}{\sigma}\right)}\cdot\frac{1}{F(\alpha+\beta' y)}
\end{gather*}
is finite and positive, if and only if 
\begin{enumerate}
\item[(a)]
$\beta' \geq 0$ and $\mu'=\mu$; or
\item[(b)] 
$\beta' < 0$, $\mu' > \mu$, and $\lim_{z\rightarrow-\infty}F(z)/e^{\delta z}=c\in(0,+\infty)$, where $\delta=(\mu-\mu')/(\sigma^2\beta')$, and $c=\exp\{(\mu'^2-\mu^2)/(2\sigma^2)-\delta\alpha\}$. 
\end{enumerate} 
\end{lemma}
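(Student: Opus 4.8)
The plan is to collapse the whole expression into a single statement about the left tail of $F$, and then split into cases according to the signs of $\beta'$ and of $\mu-\mu'$. First I would rewrite the Gaussian ratio exactly as in the preceding lemmas,
\[
\frac{\phi\left(\frac{y-\mu}{\sigma}\right)}{\phi\left(\frac{y-\mu'}{\sigma}\right)} = \exp\left\{\frac{(\mu-\mu')y}{\sigma^2} + \frac{\mu'^2-\mu^2}{2\sigma^2}\right\},
\]
so that the quantity under study is this exponential times $1/F(\alpha+\beta' y)$. The factor $\exp\{(\mu'^2-\mu^2)/(2\sigma^2)\}$ is a harmless positive constant, and the limit is decided by the competition between the linear-in-$y$ exponent and the decay of $F(\alpha+\beta' y)$.

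The easy direction is $\beta'\ge 0$. Then $\alpha+\beta' y\to+\infty$ (or is constant when $\beta'=0$), so $F(\alpha+\beta' y)$ tends to a strictly positive constant: $1$ when $\beta'>0$, and $F(\alpha)>0$ when $\beta'=0$ (using the running assumption that $F$ is strictly increasing with support $\mathbb{R}$). Hence the limit is finite and positive precisely when the surviving factor $\exp\{(\mu-\mu')y/\sigma^2\}$ converges to a finite positive number, i.e. when its slope vanishes, $\mu=\mu'$; this gives case (a). For $\mu\neq\mu'$ the slope is nonzero and the limit is $0$ or $+\infty$.

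The substance is $\beta'<0$, where $z:=\alpha+\beta' y\to-\infty$ as $y\to+\infty$ and $F(z)\to 0$, producing an indeterminate form. Here I would change variables to $z$, writing $y=(z-\alpha)/\beta'$, which turns the linear exponent into $\delta z$ with $\delta=(\mu-\mu')/(\sigma^2\beta')$; the whole expression becomes $\frac{e^{\delta z}}{F(z)}\exp\{-\delta\alpha+(\mu'^2-\mu^2)/(2\sigma^2)\}$. The sign of $\delta$ then governs everything: if $\mu\ge\mu'$ then $\delta\le 0$, so $e^{\delta z}\not\to 0$ while $1/F(z)\to+\infty$, forcing the limit to $+\infty$; thus finiteness requires $\mu'>\mu$, hence $\delta>0$. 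With $\delta>0$ the expression is the indeterminate product $e^{\delta z}\cdot(1/F(z))$ of type $0\cdot\infty$, and it converges to a finite positive value if and only if $\lim_{z\to-\infty}F(z)/e^{\delta z}=:c$ exists in $(0,+\infty)$, which is the tail condition in (b). Evaluating the limit then yields $c^{-1}\exp\{(\mu'^2-\mu^2)/(2\sigma^2)-\delta\alpha\}$, so the recorded value $c=\exp\{(\mu'^2-\mu^2)/(2\sigma^2)-\delta\alpha\}$ is exactly the one making this limiting ratio equal to one, which is the normalization relevant when the lemma is applied to two matching observed densities.

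The main obstacle is the $0\cdot\infty$ indeterminacy in the $\beta'<0$ branch: the key insight is that the $z$-substitution converts the Gaussian growth rate into the precise exponential rate $\delta$ that the left tail of $F$ must match, and that the sign constraint $\mu'>\mu$ is \emph{forced} rather than assumed. Once the substitution is in place, the remaining work — tracking the constant $c$ and reading off its value — is routine bookkeeping.
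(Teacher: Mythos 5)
Your proof is correct and follows essentially the same route as the paper's: rewrite the Gaussian ratio as $\exp\{(\mu-\mu')y/\sigma^2\}$ times a positive constant and compare that exponential against the left tail of $F$ via the substitution $z=\alpha+\beta' y$, splitting on the sign of $\beta'$. You in fact supply more detail than the paper's very terse argument, including the correct observation that the stated value of $c$ is the normalization that makes the limit equal to one (the form needed when the lemma is applied to two matching observed densities) rather than merely finite and positive.
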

\begin{proof}
We have
\[\lim\limits_{y\rightarrow+\infty}\frac{\phi\left(\frac{y-\mu}{\sigma}\right)}{\phi\left(\frac{y-\mu'}{\sigma}\right)}\cdot\frac{1}{F(\alpha+\beta' y)}=\exp\left\{\frac{\mu'^2-\mu^2}{2\sigma^2}\right\}\cdot\lim\limits_{y\rightarrow+\infty}\frac{\exp\left\{\frac{(\mu-\mu')y}{\sigma^2}\right\}}{F(\alpha+\beta' y)}.\]
For the case with $\beta' \geq0$, the limit is finite and positive if and only if $\mu'=\mu$. For the case with $\beta' <0$, the limit is finite and positive if and only if $\lim_{z\rightarrow-\infty}F(z)/e^{\delta z}=c\in(0,+\infty)$, where
$$
\delta=\frac{\mu-\mu'}{\sigma^2\beta'}>0, \quad c=\exp\left(\frac{\mu'^2-\mu^2}{2\sigma^2}-\delta\alpha\right), \quad 
z=\alpha+\beta' y \rightarrow - \infty.
$$
\end{proof}

\begin{lemma}\label{lem:5}
For any positive integer $K=\sum_{i=1}^I J_i$ and any parameters $\{ (\sigma_i^2,\mu_{ij}): i=1,\ldots,I; j=1,\ldots,J_i \} $, $\alpha$ and $\beta$, if $\sigma_{i+1}^2<\sigma_i^2$ and $\mu_{i(j+1)}<\mu_{ij}$, then the functions 
$$
\left\{ Q_{ij}(y)=\phi\left(   \frac{ y-\mu_{ij}}{ \sigma_i}  \right)  
F\left[     \frac{  \alpha \psi(\sigma_k)+\beta\{y-\kappa(\mu_{ij}) \}   } { \varphi(\sigma_i) } \right]  : i=1, \ldots, I; j=1,\ldots, J_i \right\}
$$ 
are linearly uncorrelated. 
\end{lemma}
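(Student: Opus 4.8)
The plan is to establish the linear independence of the $Q_{ij}$ by a tail-peeling argument, comparing the functions pairwise in a single tail in which every $F$-factor degenerates to a constant, so that the competition between terms is decided entirely by the Gaussian factors. First I would rewrite each function as $Q_{ij}(y)=\phi\{(y-\mu_{ij})/\sigma_i\}\,F(a_{ij}+b_i y)$, where the slope $b_i=\beta/\varphi(\sigma_i)$ depends only on $i$ and, because $\varphi>0$, carries the sign of $\beta$ for \emph{every} component, while the intercept $a_{ij}=\{\alpha\psi(\sigma_i)-\beta\kappa(\mu_{ij})\}/\varphi(\sigma_i)$ is a finite constant. The crucial structural observation is that since a single common $\beta$ drives all the slopes, there is one tail in which all the $F$-factors converge to the same limit, which removes them from the comparison; this is precisely why this lemma needs only Lemmas~\ref{lem:1} and~\ref{lem:2} and not Condition~\ref{condi:A}.

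Concretely, suppose $\sum_{i,j}c_{ij}Q_{ij}(y)=0$ for all $y$, and treat first the case $\beta\geq 0$, so that $b_i\geq 0$ and $F(a_{ij}+b_iy)\to 1$ (or to a positive constant when $\beta=0$) as $y\to+\infty$. I would order the index pairs by decreasing $\sigma_i$ and then, within each fixed $i$, by decreasing $\mu_{ij}$; the hypotheses $\sigma_{i+1}^2<\sigma_i^2$ and $\mu_{i(j+1)}<\mu_{ij}$ make this a strict total order with $(1,1)$ at the top. The key step is to prove that whenever $(i,j)\succ(i',j')$ in this order one has $Q_{i'j'}(y)/Q_{ij}(y)\to 0$ as $y\to+\infty$. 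When $\sigma_{i'}<\sigma_i$ this is exactly Lemma~\ref{lem:1}(a): the faster-decaying Gaussian sits in the numerator and the $F$-ratio tends to a finite positive limit, giving $0$ \emph{for any} $\mu,\mu'$. When $\sigma_{i'}=\sigma_i$ but $\mu_{i'j'}<\mu_{ij}$ it is Lemma~\ref{lem:2}, which again yields $0$ because the smaller mean lies in the numerator.

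With these limits established, the coefficients fall one at a time in dominance order: dividing the identity $\sum c_{ij}Q_{ij}\equiv 0$ by the current leading term and letting $y\to+\infty$ forces that coefficient to vanish, after which the same comparison applies to the next term, and so on until all $c_{ij}=0$. Because the pairwise domination holds for \emph{all} pairs with $(i,j)\succ(i',j')$ and not merely adjacent ones, the top remaining term always dominates every other remaining term, so the peeling never stalls. The case $\beta<0$ is handled by the reflection $y\mapsto -y$, which sends $b_i$ to $-b_i\geq 0$ and reverses the $\mu$-ordering, reducing it to the case just treated; equivalently, the $\beta<0$ statements of Lemmas~\ref{lem:1} and~\ref{lem:2} (as $y\to-\infty$) give the conclusion directly.

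I expect the main difficulty to be bookkeeping rather than analysis: pinning down the lexicographic dominance order precisely and checking that every off-leading ratio genuinely vanishes in the chosen tail uniformly over the $\mu$-values, so that Lemma~\ref{lem:1}(a) can be invoked ``for any $\mu,\mu'$.'' The one point requiring care is confirming that the intercepts $a_{ij}$ and the bounded quantities $\psi(\sigma_i),\kappa(\mu_{ij}),\varphi(\sigma_i)$ enter only as finite constants and never alter either the sign of the slope $b_i$ or the tail limit of $F$; this is exactly what guarantees that a single tail suffices and that Condition~\ref{condi:A} is not needed for this lemma.
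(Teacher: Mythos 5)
Your proposal is correct and follows essentially the same route as the paper's proof: for $\beta\geq 0$ the paper divides the putative linear relation by the dominant Gaussian factor $\phi\{(y-\mu_{11})/\sigma_1\}$, shows all non-leading terms vanish as $y\to+\infty$ (exactly your Lemma~\ref{lem:1}(a)/Lemma~\ref{lem:2} dominance comparisons), concludes the leading coefficient is zero since the surviving $F$-factor has a positive limit, and then peels the remaining terms in the same lexicographic order, with the case $\beta<0$ handled by the reflection $\tilde y=-y$, $\tilde\mu_{ij}=-\mu_{ij}$, $\tilde\kappa(\tilde\mu_{ij})=-\kappa(-\mu_{ij})$. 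The only cosmetic difference is that you divide by the full leading term $Q_{11}$ rather than by its Gaussian factor alone, which is equivalent here because the $F$-factor tends to a finite positive constant in the chosen tail.
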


\begin{proof}
Suppose there were real numbers $\{p_{ij}:i=1,\ldots,I; j=1,\ldots,J_i\}$ such that
\begin{gather}
\sum\limits_{i=1}^I\sum\limits_{j=1}^{J_i} p_{ij} Q_{ij}(y)=0,\label{eq::mixture-lemma5}
\end{gather}
for any $y.$ We will discuss Equation (\ref{eq::mixture-lemma5}) for the following two cases. 
\begin{enumerate}
\item[(a)] $\beta\geq0$.
Dividing Equation (\ref{eq::mixture-lemma5}) by $\phi\{  (y-\mu_{11}) / \sigma_1   \} / \sigma_1$, we have 
$
S_1(y)+S_2(y)+S_3(y)=0 ,
$
for any $y,$ where
\begin{align*}
S_1(y)=&p_{11}F\left[\frac{\alpha\psi(\sigma_i)+\beta\{y-\kappa(\mu_{11})\}}{\varphi(\sigma_1)}\right],\\
S_2(y)=&\sum\limits_{j=2}^{J_1}\frac{p_{1j}\sigma_1}{\sigma_1}\exp\left\{\frac{(\mu_{1j}-\mu_{11})y}{\sigma_1^2}+\frac{\mu_{11}^2-\mu_{1j}^2}{2\sigma_1^2}\right\}F\left[\frac{\alpha\psi(\sigma_i)+\beta\{y-\kappa(\mu_{ij})\}}{\varphi(\sigma_1)}\right],\\
S_3(y)=&\sum\limits_{i=2}^I\sum\limits_{j=1}^{J_i}\frac{ p_{ij}\sigma_1}{\sigma_i}\exp\left\{\frac{(y-\mu_{11})^2}{2\sigma_1^2}-\frac{(y-\mu_{ij})^2}{2\sigma_i^2}\right\}F\left[\frac{\alpha\psi(\sigma_i)+\beta\{y-\kappa(\mu_{ij})\}}{\varphi(\sigma_i)}\right],
\end{align*}
correspond to the first term, the sum of the second to the $J_1$th terms with variance $\sigma_1^2$, and the sum of the remaining terms of Equation (\ref{eq::mixture-lemma5}), respectively.
We can verify that $\lim_{y\rightarrow+\infty}S_2(y)=0$ and $\lim_{y\rightarrow+\infty}S_3(y)=0$. Therefore we must have $\lim_{y\rightarrow+\infty}S_1(y)=0$. Because $\beta \geq  0$ implies
\[
\lim\limits_{y\rightarrow+\infty}F\left[\frac{\alpha\psi(\sigma_i)+\beta\{y-\kappa(\mu_{11})\}}{\varphi(\sigma_1)}\right]\\
>0,
\]
we have $p_{11}=0$. By the same argument, we can prove that $p_{ij}=0$ for all $i,j$. Therefore $Q_{ij}(y)$ are linearly uncorrelated.

\item[(b)] $\beta<0$. We let $\tilde \beta=-\beta$, $\tilde y=-y$, $\tilde\mu_{ij}=-\mu_{ij}$, $\tilde\kappa(\tilde\mu_{ij})=-\kappa(-\mu_{ij})$, and 
\[
\tilde Q_{ij}(\tilde y)=Q_{ij}( y) =\phi\left(\frac{\tilde y-\tilde\mu_{ij}}{\sigma_i}\right)
F\left\{\frac{\alpha\psi(\sigma_i)+\tilde\beta(\tilde y-\tilde\kappa(\tilde\mu_{ij}))}{\varphi(\sigma_i)}\right\}.
\] 
Because $\tilde\beta>0$, $\{\tilde Q_{ij}(\tilde y): i=1,\ldots,I; j=1,\ldots,J_i  \}$  are linearly uncorrelated according the discussion in the case (a). So $\{  Q_{ij}(y): i=1,\ldots,I;j=1,\ldots,J_i \}$ are linearly uncorrelated.
\end{enumerate}
\end{proof}

\begin{proof}[Proof of Theorem \ref{thm:NF}]
We use a proof by contradiction to show that the parameters can be identified by the observed distribution $P(y, R=1)$.
Suppose that there were two sets of parameters satisfying the same observed distribution:
\begin{gather}\label{prf:1.1}
    \frac{1}{\sigma}\phi\left(\frac{y-\mu}{\sigma}\right)\cdot F(\alpha+\beta y)=
        \frac{1}{\sigma'}\phi\left(\frac{y-\mu'}{\sigma'}\right)\cdot F(\alpha'+\beta' y).
\end{gather}
Below we show the results (a), (b) and (c) of the theorem one by one.

\begin{enumerate}
\item[(a)] Equation (\ref{prf:1.1}) implies
\begin{gather}
\frac{\phi\left(\frac{y-\mu}{\sigma}\right)}{\phi\left(\frac{y-\mu'}{\sigma'}\right)}\cdot\frac{F(\alpha+\beta y)}{F(\alpha'+\beta' y)}=\frac{\sigma}{\sigma'}\in(0,+\infty).\label{prf:1.2}
\end{gather}
Lemma \ref{lem:1} implies that $\sigma\neq\sigma'$ is impossible since as $y\rightarrow \pm \infty$ the limit of (\ref{prf:1.2}) is either $0$ or $+\infty$. Thus we must have $\sigma=\sigma'$.
To prove $|\beta|=|\beta'|$, we discuss the following four cases.
\begin{enumerate}
\item[(a.1)] $\beta\geq 0, \beta'\geq 0$. 
We must have $\mu=\mu'$, because otherwise as $y \rightarrow +\infty$, the limit of (\ref{prf:1.2}) is either $0$ or $+\infty$ by Lemma \ref{lem:2}. Therefore, $F(\alpha+\beta y)=F(\alpha'+\beta' y)$ for any $y$. Because $F(\cdot)$ is strictly monotone, we must have 
$\alpha=\alpha'$ and $\beta=\beta'$.

\item[(a.2)] $\beta \geq 0, \beta' <0.$ 
Because $\lim_{y\rightarrow+\infty}F(\alpha+\beta y)$ is finite and positive, we have that
$$
\lim\limits_{y\rightarrow+\infty}\frac{\phi\left(\frac{y-\mu}{\sigma}\right)}{\phi\left(\frac{y-\mu'}{\sigma}\right)}\cdot\frac{1}{F(\alpha+\beta' y)}
$$
is finite and positive. 
According to Lemma \ref{lem:4}, we must have $\lim_{z\rightarrow-\infty}F(z)/e^{\delta z}=c\in(0,+\infty)$ with $\delta=(\mu-\mu')/(\sigma^2\beta')$. Let $y\rightarrow -\infty$, and a similar application of Lemma \ref{lem:4} to (\ref{prf:1.2}) gives us $\delta = (\mu'-\mu)/(\sigma^2\beta)$.
Therefore, we have $\beta=-\beta'$.

\item[(a.3)] $\beta<0, \beta'<0$. The discussion is similar to (a.1) by letting $y\rightarrow -\infty.$

\item[(a.4)] $\beta < 0, \beta ' \geq 0.$ The discussion is similar to (a.2).
\end{enumerate}

\item[(b)] From (a), $\sigma^2$ and $|\beta|$ are identifiable. When the sign of $\beta$ is known, we need only to consider the cases (a.1) or (a.3). We have proved identifiability of all the parameters in (a.1) and (a.3).

\item[(c)] 
We will prove that under the Condition \ref{condi:A}, the cases (a.2) and (a.4) are impossible. For case (a.2) with $\beta \geq 0, \beta' <0$, as $y\rightarrow +\infty$, the limit of $F(\alpha + \beta y)$ is finite and positive, and therefore the limit of
$$
\frac{\phi\left(\frac{y-\mu}{\sigma}\right)}{\phi\left(\frac{y-\mu'}{\sigma}\right)}\cdot\frac{1}{F(\alpha+\beta' y)}   
$$ 
is also finite and positive. Due to Lemma \ref{lem:4} and $\beta' < 0$, we must have $ \lim_{ z\rightarrow -\infty } F(z) / e^{ \delta z} = c\in (0, +\infty)$ for some $\delta>0$, which is contradictory to Condition \ref{condi:A}. Similarly, the case (a.4) is also impossible. For cases (a.1) and (a.3) we have already established the identifiability of the parameters. 
\end{enumerate}
\end{proof}

\begin{proof}[Proof of Corollary \ref{cor:NT}]
Suppose that two sets of parameters $(\sigma,\mu,\nu,\alpha,\beta)$ and $(\sigma',\mu',\nu',\alpha',\beta')$ satisfy the same observed distribution:
\begin{align*}
    \frac{1}{\sigma}\phi\left(\frac{y-\mu}{\sigma}\right)\cdot T_\nu(\alpha+\beta y)=&
        \frac{1}{\sigma'}\phi\left(\frac{y-\mu'}{\sigma'}\right)\cdot T_{\nu'}(\alpha'+\beta' y).
\end{align*}
Similar to the proof of Theorem \ref{thm:NF}(a), we can show $\sigma = \sigma'$ according to Lemma \ref{lem:1}. For any $\nu$, a $t$ random variable with degrees of freedom $\nu$ satisfies $t_\nu(z)\propto (1+z^2)^{-(1+\nu)/2}$, and thus $T_\nu(z)$ satisfies Condition \ref{condi:A} in Theorem \ref{thm:NF} by L'Hospital's rule. Therefore, cases (a.2) and (a.4) in the proof of Theorem \ref{thm:NF} are impossible according to the proof of Theorem \ref{thm:NF}(c). Below we discuss cases (a.1) and (a.3).

For case (a.1) with $\beta \geq 0, \beta'\geq 0$, we can show $\mu = \mu'$ similar to the proof of Theorem \ref{thm:NF} according to Lemma \ref{lem:2}. 
Because $(\mu, \sigma^2)$ are identifiable, we have $T_\nu(\alpha+\beta y) = T_{\nu'}(\alpha'+\beta' y) $ for any $y$. Under the condition $\beta\neq0$ required by Corollary \ref{cor:NT},  we can show that, as $y\rightarrow -\infty$, the limit of $T_\nu(\alpha+\beta y) / T_{\nu'}(\alpha'+\beta' y) $ is $0$ if $\beta'=0$. Therefore $\beta'>0$.  Then as $y\rightarrow -\infty$, the limit of $T_\nu(\alpha+\beta y) / T_{\nu'}(\alpha'+\beta' y) $ is $+\infty$ if $\nu<\nu'$, and $0$ if $\nu>\nu'$. Therefore $\nu=\nu'$, and thus $\alpha=\alpha', \beta=\beta'$.

The discussion for case (a.3) is similar to case (a.1).
\end{proof}

\begin{proof}[Proof of Theorem \ref{thm:NFx}]
Suppose that there were two sets of parameters satisfying the same observed distribution:
\begin{gather}
    \frac{1}{\sigma(x,\theta)}\phi\left\{\frac{y-\mu(x,\gamma)}{\sigma(x,\theta)}\right\}  F\{g(x,\alpha)+\beta y\}=
        \frac{1}{\sigma(x,\theta')}\phi\left\{\frac{y-\mu(x,\gamma')}{\sigma(x,\theta')}\right\}  F\{g(x,\alpha')+\beta' y\}. \label{prf:2.0}
\end{gather}
Replacing $\mu$, $\sigma^2$ and $\alpha$ in the proof of Theorem \ref{thm:NF} with $\mu(x,\gamma)$, $\sigma^2(x,\theta)$ and $g(x,\alpha)$ and conditional on $X=x$, the identifiability of $\mu(x,\gamma)$, $\sigma^2(x,\theta)$, $g(x,\alpha)$ and $\beta$ is the same as that of $\mu$, $\sigma^2$, $\alpha$ and $\beta$ in Theorem \ref{thm:NF}. As $x$ varies, we obtain identifiability of functions $\mu(\cdot,\gamma)$, $\sigma^2(\cdot,\theta)$ and $g(\cdot,\alpha)$.
Because there are one-to-one mappings between the parameters $(\gamma,\theta,\alpha)$ and these functions, the parameters are identifiable. Therefore, the results (a), (b) and (c) of Theorem \ref{thm:NFx} hold. 

Now we will prove the result (d). Similar to the proof of Theorem \ref{thm:NF},  we can show the identifiability of the parameters for cases (a.1) $\beta\geq 0, \beta'\geq 0$  and (a.3) $\beta< 0, \beta'< 0$. We need only to show that (a.2) $\beta\geq 0, \beta'<0$  and (a.4) $\beta< 0, \beta'\geq 0$  are impossible under the condition in (d) of Theorem \ref{thm:NFx}. Without loss of generality, we show case (a.2), and the discussion for case (a.4) is analogous.

Note that $\theta$ is identifiable and $\beta' = -\beta$ by the result (a). For case (a.2) with $\beta \geq 0$ and $ \beta' <0$, 
Equation (\ref{prf:2.0}) implies that
\begin{gather}
\lim\limits_{y\rightarrow \pm \infty}\frac{\phi\left\{\frac{y-\mu(x,\gamma)}{\sigma(x,\theta)}\right\}}{\phi\left\{\frac{y-\mu(x,\gamma')}{\sigma(x,\theta)}\right\}}\cdot \frac{F\{g(x,\alpha)+\beta y\}}{F\{g(x,\alpha')+\beta' y\}}=1.
\end{gather}
We let $y\rightarrow + \infty$ and apply Lemma \ref{lem:4}; we then let $y\rightarrow - \infty$ and apply Lemma \ref{lem:4} again. Consequently, we must have two sets of conditions, i.e.,
$\lim_{z\rightarrow-\infty}F(z)/e^{\delta z}=c$ with
\begin{eqnarray}
\delta&=&-\frac{\mu(x,\gamma)-\mu(x,\gamma')}{\sigma^2(x,\theta)\beta}=\frac{\mu(x,\gamma)-\mu(x,\gamma')}{\sigma^2(x,\theta)\beta'},\label{prf:2.1}\\
\log(c)&=&\frac{\mu(x,\gamma)^2-\mu(x,\gamma')^2}{2\sigma(x,\theta)^2}-\delta g(x,\alpha)=\frac{\mu(x,\gamma')^2-\mu(x,\gamma)^2}{2\sigma(x,\theta)^2}-\delta g(x,\alpha').\label{prf:2.4}
\end{eqnarray}
Equation (\ref{prf:2.4}) holds for any $x$ and therefore holds for a particular $x_0$. Taking the difference between Equation (\ref{prf:2.4}) for $x$ and $x_0$, we have that
\begin{eqnarray*}
-\delta\{g(x,\alpha)-g(x_0,\alpha)\}  & = & \frac{\mu^2(x,\gamma')-\mu^2(x,\gamma)}{2\sigma^2(x,\theta)}-\frac{\mu^2(x_0,\gamma')-\mu^2(x_0,\gamma)}{2\sigma^2(x_0,\theta)},\\
\delta\{g(x,\alpha')-g(x_0,\alpha')\} & = & \frac{\mu^2(x,\gamma')-\mu^2(x,\gamma)}{2\sigma^2(x,\theta)}-\frac{\mu^2(x_0,\gamma')-\mu^2(x_0,\gamma)}{2\sigma^2(x_0,\theta)}.
\end{eqnarray*}

From the first identity of (\ref{prf:2.1}), we have $\mu(x,\gamma')=\mu(x,\gamma)+\delta\beta\sigma^2(x,\theta)$. Plugging it into the above two equations, we have 
\begin{eqnarray}
-g(x,\alpha)+g(x_0,\alpha) & = & \beta\{\mu(x,\gamma)-\mu(x_0,\gamma)\}+\frac{\delta\beta^2}{2}\{\sigma^2(x,\theta)-\sigma^2(x_0,\theta)\},\label{prf:2.2}\\
g(x,\alpha')-g(x_0,\alpha') & = & \beta\{\mu(x,\gamma)-\mu(x_0,\gamma)\}+\frac{\delta\beta^2}{2}\{\sigma^2(x,\theta)-\sigma^2(x_0,\theta)\}.\label{prf:2.3}
\end{eqnarray}
Either one of (\ref{prf:2.2}) and (\ref{prf:2.3}) conflicts with the condition that $a\cdot\mu(x,\gamma)+b\cdot \sigma^2(x,\theta)+  g(x,\alpha) \neq c$ for any nonzero vector $(a,b,c)$ and for any $(\gamma,\theta,\alpha)$. Thus (a.2) is impossible.
\end{proof}

\begin{proof}[Proof of Theorem \ref{thm:slope}]
Theorem \ref{thm:slope} is a special case of Theorem \ref{thm:NFx} with $\sigma(x,\theta) = \sigma$ being a constant. The first identity of Equation (\ref{prf:2.1}) implies that for any $x$ and a particular $x_0$,
\[-\frac{\mu(x,\gamma_1)-\mu(x,\gamma_1')}{\sigma^2\beta}=-\frac{\mu(x_0,\gamma_1)-\mu(x_0,\gamma_1')}{\sigma^2\beta}.\]
Therefore, we have
$
\mu(x,\gamma_1)-\mu(x_0,\gamma_1)=\mu(x,\gamma_1')-\mu(x_0,\gamma_1'),
$
implying
$
 \partial \mu(x,\gamma_1) / \partial x =  \partial \mu(x,\gamma_1') / \partial x.
$
Because $\sigma(x,\theta) = \sigma$ is a constant,
Equations (\ref{prf:2.2}) and (\ref{prf:2.3}) imply
$
g(x,\alpha')-g(x_0,\alpha')=-g(x,\alpha)+g(x_0,\alpha),
$
and therefore,
$
 \partial g(x,\alpha') / \partial x =-  \partial g(x,\alpha) / \partial x .
$
\end{proof}

\begin{proof}[Proof of Corollary \ref{cor:logit}]
The models of Corollary \ref{cor:logit} are special cases of Theorem \ref{thm:NFx} with $\sigma(x,\theta) = \sigma$ being a constant, $\mu(x,\gamma)=\gamma_0+x^T\gamma_1$, and $g(x,\alpha)=\alpha_0+x^T\alpha_1$. Below we show the results (a) and (b) one by one.

\begin{enumerate}
\item[(a)]
Similar to the proof of Theorem \ref{thm:NFx}, we need only to show that cases (a.2) $\beta\geq 0, \beta'< 0$  and (a.4) $\beta< 0, \beta'\geq 0$ are impossible. Without loss of generality, we discuss only case (a.2). 
If there were two different sets of parameters satisfying the same observed distribution, we obtain from Equation (\ref{prf:2.4}) that
\begin{eqnarray}
\gamma_0'=\gamma_0+\delta\sigma^2\beta, \quad 
\alpha_0' =-\alpha_0-2\log(c)/\delta, \quad 
\gamma_1'=\gamma_1,\quad 
\beta'=-\beta,\quad
\alpha_1'=-\alpha_1, 
\label{eq::logit-01}\\
\alpha_1+\beta\gamma_1=0, \quad 2\delta\alpha_0+2\delta\beta\gamma_0+\delta^2\sigma^2\beta^2+2\log(c)=0.
\label{eq::logit-02}
\end{eqnarray}
The above equations imply that $\tau_1=0$ and $\tau_2=0$, which conflicts with the condition that $\tau_1\neq0$ or $\tau_2\neq0$. So the case (a.2) is impossible. The proof of (a.4) is similar. Therefore, $\tau_1\neq0$ or $\tau_2\neq0$ is a sufficient condition for identifiability of the parameters.

\item[(b)] For the Logistic missing mechanism, we can further prove the necessity of the condition $\tau_1\neq0$ or $\tau_2\neq0$. If $\tau_1=0$ and $\tau_2=0$, we can verify the following equation:
$$
\exp\left\{-\frac{(y-\gamma_0-x\gamma_1)^2}{2\sigma^2}\right\}\frac{\exp(\alpha_0+x\alpha_1+\beta y)}{1+\exp(\alpha_0+x\alpha_1+\beta y)} 
=
\exp\left\{-\frac{(y-\gamma_0'-x\gamma_1')^2}{2\sigma^2}\right\}\frac{\exp(\alpha_0'+x\alpha_1'+\beta' y)}{1+\exp(\alpha_0'+x\alpha_1'+\beta' y)},
$$
i.e., the two sets of parameters satisfying Equations (\ref{eq::logit-01}) and (\ref{eq::logit-02}) must also have the same observed distribution if $\tau_1=0$ and $\tau_2=0$. Therefore $\tau_1\neq0$ or $\tau_2\neq0$ is also a necessary condition for identifiability of the parameters.
\end{enumerate}
\end{proof}

\begin{proof}[Proof of Theorem \ref{thm:MNF} and \ref{thm:MNCondF}]
Theorem \ref{thm:MNF} is the special case of Theorem \ref{thm:MNCondF} with $\psi(\cdot)=1$, $\kappa(\cdot)=0$ and $\varphi(\cdot)=1$. Therefore, we prove only Theorem \ref{thm:MNCondF}.

For notational convenience, we use double indices $(i, j)$ for $k$ such that $\{\sigma_i^2 : i = 1,\ldots,I\}$ are sorted in a decreasing order, and then $\{\mu_{ij} : j = 1,...,J_i; \sum_{i=1}^I J_i = K\}$ are sorted in a decreasing order for each $i$.
We first define:
\begin{eqnarray*}
Q_{ij}(y) &=&
\frac{\pi_{ij}}{\sigma_i}\phi\left(  \frac{y-\mu_{ij}}{\sigma_i}\right) F\left[\frac{\alpha\psi(\sigma_i)+\beta\{y-\kappa(\mu_{ij})\}}{\varphi(\sigma_i)}\right],\\
h_1(y)&=&
\sum\limits_{i=1}^{I}\sum\limits_{j=1}^{J_i} Q_{ij}(y)/Q_{11}(y),\quad 
h_2(y)=\sum\limits_{i=1}^{I}\sum\limits_{j=1}^{J_i} Q_{ij}(y)/Q_{1J_1}(y).
\end{eqnarray*}
Suppose that there were another set of parameters having the same observed distribution, and we use $Q_{ij}'(y)$, $h_1'(y)$ and $h_2'(y)$ to denote the  functions under this set of parameters.
We have
\begin{gather}
\sum\limits_{i=1}^{I}\sum\limits_{j=1}^{J_i} Q_{ij}(y) = \sum\limits_{i=1}^{I'}\sum\limits_{j=1}^{J_i'} Q_{ij}'(y).\label{prf:6.1} 
\end{gather}

By the definitions of $h_1(y),h_2(y),h_1'(y)$ and $h_2'(y)$, we have
\begin{equation}
 \frac{Q_{11}(y)}{Q_{11}'(y)}\cdot\frac{h_1(y)}{h_1'(y)}=1,
\quad 
\frac{Q_{1J_1}(y)}{Q_{1J_1'}'(y)}\cdot\frac{h_2(y)}{h_2'(y)}=1. \label{eq::def-h1-h2}
\end{equation}
We can re-express $h_1(y)$ as
\begin{align*}
h_1(y)=1+&\sum\limits_{j=2}^{J_1}\frac{ \pi_{1j}}{ \pi_{11}}\exp\left\{\frac{(\mu_{1j}-\mu_{11})y}{2\sigma_1^2}+\frac{\mu_{11}^2-\mu_{1j}^2}{2\sigma_1^2}\right\}\frac{F\left[\frac{\alpha\psi(\sigma_1)+\beta\{y-\kappa(\mu_{1j})\}}{\varphi(\sigma_1)}\right]}{F\left[\frac{\alpha\psi(\sigma_1)+\beta\{y-\kappa(\mu_{11})\}}{\varphi(\sigma_1)}\right]}\\
+&\sum\limits_{i=2}^{I}\sum\limits_{j=1}^{J_i}\frac{ \pi_{ij}\sigma_1}{ \pi_{11}\sigma_i}\exp\left\{-\frac{(y-\mu_{ij})^2}{2\sigma_i^2}+\frac{(y-\mu_{11})^2}{2\sigma_1^2}\right\}\frac{F\left[\frac{\alpha\psi(\sigma_i)+\beta\{y-\kappa(\mu_{ij})\}}{\varphi(\sigma_i)}\right]}{F\left[\frac{\alpha\psi(\sigma_1)+\beta\{y-\kappa(\mu_{11})\}}{\varphi(\sigma_1)}\right]}.
\end{align*}
Note  that $\varphi(\cdot)$ and $\kappa(\cdot)$ are increasing functions, and therefore the limit
\[
\lim\limits_{y\rightarrow+\infty}\frac{F\left[\frac{\alpha\psi(\sigma_i)+\beta\{y-\kappa(\mu_{ij})\}}{\varphi(\sigma_i)}\right]}{F\left[\frac{\alpha\psi(\sigma_1)+\beta\{y-\kappa(\mu_{11})\}}{\varphi(\sigma_1)}\right]}
\quad (i=1, \ldots, I)
\]
must be finite and positive regardless of the sign of $\beta$. The exponential terms in $h_1(y)$ converge to $0$ as $y\rightarrow+\infty$,
so we have $\lim_{y\rightarrow+\infty}h_1(y)=1$. Similarly, $\lim_{y\rightarrow+\infty}h_1'(y)=1$, $\lim_{y\rightarrow-\infty}h_2(y)=1$, and $\lim_{y\rightarrow-\infty}h_2'(y)=1$. From Equation (\ref{eq::def-h1-h2}), we have that
\begin{eqnarray}
\lim\limits_{y\rightarrow+\infty}\frac{Q_{11}(y)}{Q_{11}'(y)}
&=&
\lim\limits_{y\rightarrow+\infty}\frac{ \pi_{11}\sigma_1'\phi\left(\frac{y-\mu_{11}}{\sigma_1}\right)}{ \pi_{11}'\sigma_1\phi\left(\frac{y-\mu_{11}'}{\sigma_1'}\right)}
\cdot
\frac{F\left[\frac{\alpha\psi(\sigma_1)+\beta\{y-\kappa(\mu_{11})\}}{\varphi(\sigma_1)}\right]}{F\left[\frac{\alpha'\psi(\sigma_1')+\beta'\{y-\kappa(\mu_{11}')\}}{\varphi(\sigma_1')}\right]}= 1,\label{prf:6.2}  \\
\lim\limits_{y\rightarrow-\infty}\frac{Q_{1J_1}(y)}{Q_{1J_1'}'(y)}
&=&
\lim\limits_{y\rightarrow-\infty}\frac{ \pi_{1J_1}\sigma_1'\phi\left(\frac{y-\mu_{1J_1}}{\sigma_1}\right)}{ \pi_{1J_1'}'\sigma_1\phi\left(\frac{y-\mu_{1J_1'}'}{\sigma_1'}\right)}
\cdot
\frac{F\left[\frac{\alpha\psi(\sigma_1)+\beta\{y-\kappa(\mu_{1J_1})\}}{\varphi(\sigma_1)}\right]}{F\left[\frac{\alpha'\psi(\sigma_1')+\beta'\{y-\kappa(\mu_{1J_1'}')\}}{\varphi(\sigma_1')}\right]}= 1.\label{prf:6.3}
\end{eqnarray} 
If $\sigma_1\neq\sigma_1'$, Lemma \ref{lem:1} implies that the limits in (\ref{prf:6.2}) and (\ref{prf:6.3}) must be $0$ or $+\infty$, which contradict with the limits above. Therefore, we must have $\sigma_1=\sigma_1'$. We divide the remaining discussion into the following three cases.

\begin{enumerate}
\item[(a)] 
$\beta > 0$. 
We first observe that 
\[
\lim_{y\rightarrow +\infty} F\left[\frac{\alpha\psi(\sigma_1)+\beta\{y-\kappa(\mu_{11})\}}{\varphi(\sigma_1)}\right]
\] 
is finite and positive. We apply Lemma \ref{lem:4} to Equation (\ref{prf:6.2}) and obtain $\mu_{11}=\mu_{11}'$ and $\beta'\geq0$, because Condition \ref{condi:A} rules out the second case in Lemma \ref{lem:4}. We further demonstrate that $\beta' = 0$ is also impossible. Otherwise, Equation (\ref{prf:6.3}) and Lemma \ref{lem:4} with $y\rightarrow -\infty$ will conflict with Condition \ref{condi:A}. With $\sigma_1=\sigma_1'$, $\mu_{11} = \mu_{11}'$, $\beta > 0$ and $\beta ' >0$, we must have $\pi_{11} = \pi_{11}'$ by Equation (\ref{prf:6.2}).

Let $\theta_0=\alpha\psi(\sigma_1)/\varphi(\sigma_1)-\beta\kappa(\mu_{11})$, $\theta_1=\beta /\varphi(\sigma_1)$, $\theta_0'=\alpha'\psi(\sigma_1)/\varphi(\sigma_1)-\beta'\kappa(\mu_{11})$, and $\theta_1'=\beta' /\varphi(\sigma_1)$.
We plus both sides of  Equation (\ref{prf:6.1}) by $-\pi_{11}/\sigma_1 \cdot\phi \{(y-\mu_{11})/\sigma_1\}h_1(y)F(\theta_0'+\theta_1' y)$,  and obtain that
    \[  
    \frac{\frac{\pi_{11}}{\sigma_1}\phi\left(  \frac{y-\mu_{11}}{\sigma_1}\right) \{h_1(y)F(\theta_0+\theta_1 y)-h_1(y)F(\theta_0'+\theta_1' y)\}}{\frac{\pi_{11}}{\sigma_1}\phi\left(  \frac{y-\mu_{11}}{\sigma_1}\right) \{h_1'(y)F(\theta_0'+\theta_1' y)-h_1(y)F(\theta_0'+\theta_1' y)\}} = 1,\]
and thus 
\begin{eqnarray}
\lim_{y\rightarrow+\infty}\frac{F(\theta_0+\theta_1 y)-F(\theta_0'+\theta_1' y)}{h_1'(y)-h_1(y)}
\cdot  \frac{h_1(y)}{F(\theta_0'+\theta_1' y)}=1.
\label{eq::thm6-limit}
\end{eqnarray}
The second fraction of the above equation converges to one. By the definitions of $h_1(y)$ and $h_1'(y)$,  there exist $\delta_1,\delta_2\geq0$ satisfying $\delta_1+\delta_2>0$ such that 
\[h_1'(y)-h_1(y)=O(e^{-\delta_1 y^2-\delta_2y}),\quad y\rightarrow+\infty.\] 
This leads to a contradiction with Condition \ref{condi:B}, implying that Equation (\ref{eq::thm6-limit}) is impossible. So we must have  $\beta=\beta'$ and $\alpha=\alpha'$. Equation (\ref{prf:6.1}) implies that 
$$
\{ Q_{ij}(y) : i=1,\ldots, I; j=1,\ldots, J_i   \}  \cup  \{  Q_{ij}'(y) : i=1,\ldots, I' ; j=1,\ldots, J_i'  \}
$$ 
are linearly correlated, and by Lemma \ref{lem:5} we must have that $\sigma_i=\sigma_i'$, $\mu_{ij}=\mu_{ij}'$, $I=I'$ and $J_i=J_i'$ for all $i,j$.

\item[(b)] $\beta = 0$.  Similar to the argument of case $\beta > 0$, we first apply Lemma \ref{lem:4} to obtain $\mu_{11}=\mu_{11}'$ and $\beta' \geq  0$. We then rule out the case $\beta' > 0$, otherwise Equation (\ref{prf:6.3}) contradicts Condition \ref{condi:A}. By integrating over $y$ in both sides of Equation (\ref{prf:6.1}), we obtain 
$$
\sum_{i=1}^I \sum_{j=1}^{J_i} \pi_{ij}F\{\alpha\psi(\sigma_i)/\varphi(\sigma_i)\}
=
\sum_{i=1}^{I'} \sum_{j=1}^{J_i'} \pi_{ij}'F\{\alpha'\psi(\sigma_i')/\varphi(\sigma_i')\} .
$$
Dividing both sides of  Equation (\ref{prf:6.1}) by the left hand side of the above identity, we obtain two normal mixtures leading to the same observed distribution. Applying the identifiability of the normal mixture distribution \citep[][Theorem 3.1.2]{titterington1985statistical}, we obtain that $\sigma_i^2=\sigma_i'^2,\mu_{ij}=\mu_{ij}'$ and $\pi_{ij}F\{\alpha\psi(\sigma_i)/\varphi(\sigma_i)\}=\pi_{ij}'F\{\alpha'\psi(\sigma_i)/\varphi(\sigma_i)\}$, for all $i$ and $j$. Note that
$\psi(\cdot)$ and $\varphi(\cdot)$ are positive. If $\alpha > \alpha'$, we have 
$
F\{\alpha\psi(\sigma_i)/\varphi(\sigma_i)\} > F\{\alpha'\psi(\sigma_i)/\varphi(\sigma_i)\}
$
and $\pi_{ij} < \pi_{ij}'$ for all $i$ and $j$, which contradicts  $\sum_{i=1}^I \sum_{j=1}^{J_i}  \pi_{ij} =\sum_{i=1}^{I'} \sum_{j=1}^{J_i'} \pi_{ij}' = 1.$ Similarly, $\alpha < \alpha'$ is also impossible. Therefore, we have $\alpha = \alpha'$, and thus $\pi_{ij} = \pi_{ij}'$ for all $i$ and $j$.

\item[(c)] $\beta<0$. Let $\tilde \beta=-\beta$, $\tilde y=-y$, $\tilde\mu_k=-\mu_k$ and  $\tilde\kappa(\tilde \mu_{k})=-\kappa(-\mu_{k})$. We can use the same argument of (a) to prove identifiability of the parameters.
\end{enumerate}
\end{proof}

\begin{proof}[Proof of Theorem \ref{thm:TF}]
Suppose that there were two different sets of parameters satisfying the same observed distribution:
\begin{align}
    \sum\limits_{k=1}^{K} \frac{\pi_{k}}{\omega} t_{\nu}\left(\frac{y-\mu_{k}}{\omega}\right)F(\alpha+\beta y)
    =& \sum\limits_{k=1}^{K'} \frac{\pi_{k}'}{\omega'}  t_{\nu'}\left(\frac{y-\mu_{k}'}{\omega'}\right)F(\alpha'+\beta' y).\label{prf:4.1}
\end{align}

We first show that the degrees of freedom $\nu$ is identifiable. Suppose $\nu<\nu'$. We divide both sides of Equation (\ref{prf:4.1}) by
$1/\omega \cdot t_{\nu}\{(y-\mu_{1})/\omega\}$, and obtain that
\begin{eqnarray*}
\frac{\sum\limits_{k=1}^{K} \pi_{k}t_{\nu}\left(\frac{y-\mu_{k}}{\omega}\right)}{t_{\nu}\left(\frac{y-\mu_{1}}{\omega}\right)}F(\alpha+\beta y)
=
\frac{ \sum\limits_{k=1}^{K'} \pi_{k}'\frac{\omega}{\omega'}  t_{\nu'}\left(\frac{y-\mu_{k}'}{\omega'}\right)}{t_{\nu}\left(\frac{y-\mu_{1}}{\omega}\right)}F(\alpha'+\beta' y) .
\end{eqnarray*}
If $\beta\geq0$, we let $y\rightarrow+\infty$, otherwise we let $y\rightarrow-\infty$.
The left hand side of the above equation converges to a positive constant larger than $\pi_1 F(\alpha)$, but the right hand side  converges to zero. Thus $\nu<\nu'$ is impossible. Similarly, $\nu>\nu'$ is also impossible. We must have $\nu=\nu'$.

We then show that other parameters are identifiable. If $\beta>0, \beta'\leq0$, we let $y \rightarrow -\infty$. The left hand side of the above equation converges to zero, but the right hand side converges to a positive constant.
Therefore, this case is impossible. 
Similarly, we can prove that the other three cases with different signs,
$(\beta<0,\beta'\geq0)$,  $(\beta\leq 0, \beta' >0)$ and $(\beta\geq 0, \beta' <0)$, are impossible. Therefore, $\beta$ and $\beta'$ must have the same sign. We then need only to discuss the following three cases.

\begin{enumerate}
\item[(a)] $\beta>0, \beta'>0.$ 
Suppose that $(\beta,\alpha, \omega )\neq(\beta',\alpha', \omega')$. Adding $-\sum_{k=1}^{K} \pi_{k}/\omega\cdot t_{\nu}\{(y-\mu_{k})/\omega\}F(\alpha'+\beta' y)$ to both sides of (\ref{prf:4.1}), and then dividing them by $1/\omega\cdot t_{\nu}\{(y-\mu_{1})/\omega\}$, we have 
\begin{gather}
\frac{\sum\limits_{k=1}^{K} \pi_{k}t_{\nu}\left(\frac{y-\mu_{k}}{\omega}\right)}{t_{\nu}\left(\frac{y-\mu_{1}}{\omega}\right)}\{F(\alpha+\beta y)-F(\alpha'+\beta' y)\}
=
\frac{ \sum\limits_{k=1}^{K'} \pi_{k}'\frac{\omega}{\omega'}  t_{\nu}\left(\frac{y-\mu_{k}'}{\omega'}\right)-\sum\limits_{k=1}^{K} \pi_{k} t_{\nu}\left(\frac{y-\mu_{k}}{\omega}\right)}{t_{\nu}\left(\frac{y-\mu_{1}}{\omega}\right)}F(\alpha'+\beta' y) . 
\label{eq::t-identi}
\end{gather} 
If $\omega \neq \omega'$, as $y\rightarrow+\infty$, the left hand side of the above equation converges to zero, but the right hand side converges to a nonzero constant.  Therefore, we must have $\omega=\omega'$.

We can prove that, there exists an $M>0$ such that 
\[
\frac{ \sum\limits_{k=1}^{K'} \pi_{k}'  t_{\nu}\left(\frac{y-\mu_{k}'}{\omega'}\right)-\sum\limits_{k=1}^{K} \pi_{k} t_{\nu}\left(\frac{y-\mu_{k}}{\omega}\right)}{t_{\nu}\left(\frac{y-\mu_{1}}{\omega}\right)} = O(y^{-M}) \quad
(y\rightarrow\pm \infty).
\]
Then Equation (\ref{eq::t-identi}) implies that the following limits are both finite and positive:
$$
\lim_{y\rightarrow+\infty}  y^{M}  \{F(\alpha+\beta y)-F(\alpha'+\beta' y)\},\quad 
\lim_{y\rightarrow-\infty}   y^{M}   \{1-F(\alpha+\beta y)/F(\alpha'+\beta' y)\}  .
$$ 
Let $y'=\alpha'+\beta' y$, substitute $y$ with $y'$ in the above limits, and we find that the limits contradict Condition \ref{condi:C}.
So we must have $\beta=\beta'$ and $\alpha=\alpha'$. Therefore, $F(\alpha+\beta y)$ and $F(\alpha'+\beta' y)$ cancel each other in Equation (\ref{prf:4.1}). The identifiability of other parameters reduces to the identifiability of location mixture of $t$ distributions \citep[][Theorem 3.1.2]{titterington1985statistical}.

\item[(b)]
$\beta<0, \beta'<0$. 
Define  $\tilde y=-y$, $\tilde\beta=-\beta$ and $\tilde\mu_{k}=-\mu_{k}$. Similar to the above case, we can prove that all the parameters are identifiable.

\item[(c)]
$\beta=\beta'=0$. 
Integrating both sides of Equation (\ref{prf:4.1}) over $y$, we obtain $F(\alpha) = F(\alpha')$, which implies $\alpha=\alpha'$ by strict monotonicity of $F(\cdot)$.  Dividing both sides of (\ref{prf:4.1}) by $F(\alpha)$, then the two sides are both mixtures of $t$ distributions. Therefore, $\mu_k'=\mu_k,\omega'=\omega$ and $\pi_{k} =\pi_{k}' $ by the identifiability of $t$ mixture distributions \citep[][Theorem 3.1.2]{titterington1985statistical}. 
\end{enumerate}
\end{proof}

\end{document}